\documentclass{article}

\usepackage{amsmath,amssymb,amsthm}
\usepackage{tikz}
\usepackage{color}
\usepackage[toc]{appendix}
\usepackage{graphicx}
\usepackage{fancyhdr}
\usepackage{enumitem}
\usepackage{bbm}
\usepackage{parskip}
\usepackage{float}
\usepackage{chngpage}
\usepackage{calc}
\usepackage{bigints}
\usepackage{array}
\usepackage{booktabs}
\usepackage{rotating}
\usepackage{multirow}
\usepackage{adjustbox}
\usepackage{tabularx}
\usepackage{verbatim}
\usepackage{mathtools}
\usepackage{ragged2e}
\usepackage[makeroom]{cancel}
\usepackage{caption}
\usepackage{hyperref}
\usepackage{caption}
\usepackage{subcaption}
\usepackage{appendix}
\usepackage{pgfplots}
\pgfplotsset{compat=1.16}
\usepackage[english]{babel}
\usepackage{hyphenat}
\usepackage[makeindex]{imakeidx}
\usetikzlibrary{datavisualization}
\usetikzlibrary{matrix}
\usetikzlibrary{datavisualization.formats.functions}

\setlength{\textheight}{8.6in} 
\setlength{\textwidth}{6.2in}
\setlength{\topmargin}{-.3in} 
\setlength{\oddsidemargin}{0in}

\newtheorem{theorem}{Theorem}[section]
\newtheorem{proposition}[theorem]{Proposition}

\newtheorem{remark}[theorem]{Remark}
\newtheorem{assumption}[theorem]{Assumption}

\setlength\parindent{24pt}

%%%%%%%%%%% SECTION FONT SIZE  %%%%%%%%%%%%%%%%%%%%%%%%%%%%%%%%%%%%%%%
\makeatletter
\def\section{\@startsection {section}{1}{\z@}{3.25ex plus 1ex minus
		.2ex}{1.5ex plus .2ex}{\large\bf}}
\def\subsection{\@startsection{subsection}{2}{\z@}{3.25ex plus 1ex minus
		.2ex}{1.5ex plus .2ex}{\normalsize\bf}}
\@addtoreset{equation}{section} %%
%%  Without this, the numbering of equations is not reset at the beginning
%%  of each section!
%%
\makeatother
%%%%%%%%%%%%%%%%%%%%%%%%%%%%%%%%%%%%%%%%%%%%%%%%%%%%%%%%%%%%%%%%%%%%%%%

\title{Using Malliavin calculus to solve a chemical diffusion master equation}

\author{Alberto Lanconelli\thanks{Dipartimento di Scienze Statistiche Paolo Fortunati, Università di Bologna, Bologna, Italy. \textbf{e-mail}: alberto.lanconelli2@unibo.it}}

\date{\today}

\begin{document}
	
\maketitle
	
\bigskip
	
\begin{abstract}
We propose a novel method to solve a chemical diffusion master equation of birth and death type. This is an infinite system of Fokker-Planck equations where the different components are coupled by reaction dynamics similar in form to a chemical master equation. This system was proposed in \cite{del Razo} for modelling the probabilistic evolution of chemical reaction kinetics associated with spatial diffusion of individual particles. Using some basic tools and ideas from infinite dimensional Gaussian analysis we are able to reformulate the aforementioned infinite system of Fokker-Planck equations as a single evolution equation solved by a generalized stochastic process and written in terms of Malliavin derivatives and differential second quantization operators. Via this alternative representation we link certain finite dimensional projections of the solution of the original problem to the solution of a single partial differential equations of Ornstein-Uhlenbeck type containing as many variables as the dimension of the aforementioned projection space.    
\end{abstract}
	
Key words and phrases: Particle-based reaction-diffusion models, Fock space, Malliavin calculus. \\
	
AMS 2000 classification: 60H07; 60H30; 92E20.
	
\allowdisplaybreaks

\section{Introduction and statement of the main result}\label{intro}

Suppose we want to model the probabilistic evolution of a system that is initially constituted by a single particle of a chemical species $A$, which is located somewhere in the interval $[0,1]$ according to a given probability density function $\zeta:[0,1]\to\mathbb{R}$, and that undergoes: 
\begin{itemize}
	\item \emph{degradation} and \emph{creation} chemical reactions
\begin{align}\label{reaction}
	\mbox{(I)}\quad A\xrightarrow{\lambda_d(x)}\varnothing\quad\quad\mbox{(II)}\quad \varnothing\xrightarrow{\lambda_c(x)}A,
\end{align}
where $\lambda_d(x)$ denotes the propensity for reaction (I) to occur for a particle located at position $x\in [0,1]$ (i.e., the probability per unit of time for this particle to disappear) while $\lambda_c(x)$ is the propensity for a new particle to be created at position $x\in [0,1]$ by reaction (II);
\item drift-less isotropic \emph{diffusion} in space.
\end{itemize}
While reactions \eqref{reaction} alone can be analysed via the standard chemical master equation \cite{Gillespie},\cite{Lecca},\cite{Van Kampen} and the sole diffusive motion of the particles through a Fokker-Planck equation \cite{Allen},\cite{Gardiner}, the combination of these two phenomena makes the mathematical description quite challenging. This is due to the hybrid nature of the considered reaction-diffusion process, i.e. discrete in the evolution of the number of  particles (and hence of the spatial dimension of the problem) and continuous in the random movement of those particles. In the recent paper \cite{del Razo}, the authors proposed a set of equations for the functions
\begin{align*}
	\rho_n(t,x_1,...,x_n):=p_n(t,x_1,...,x_n)\mathbb{P}(N(t)=n),\quad n\geq 0,
\end{align*}
that aims to model the reaction-diffusion process described above. Here $p_n(t,x_1,...,x_n)$ represents the joint probability density function, conditioned to the event $\{N(t)=n\}$, i.e. the number of particles  at time $t$ is equal to $n$, for the positions of these particles at time $t$. The model is named \emph{chemical diffusion master equation} of birth and death type and it takes the form:
\begin{align}\label{equation}
\begin{split}
\partial_t\rho_0(t)=&\int_0^1\lambda_d(y)\rho_1(t,y)dy-\int_0^1\lambda_c(y)dy \cdot\rho_0(t),\quad t>0;\\
\partial_t\rho_n(t,x_1,...,x_n)=&\sum_{i=1}^n\partial^2_{x_i}\rho_n(t,x_1,...,x_n)\\
&+(n+1)\int_0^1\lambda_d(y)\rho_{n+1}(t,x_1,...,x_n,y)dy\\
&-\sum_{i=1}^n\lambda_d(x_i)\rho_n(t,x_1,...,x_n)\\
&+\frac{1}{n}\sum_{i=1}^n\lambda_c(x_i)\rho_{n-1}(t,x_1,...,x_{i-1},x_{i+1},...,x_n)\\
&-\int_0^1\lambda_c(y)dy\cdot\rho_n(t,x_1,...,x_n),\quad n\geq 1, t>0, (x_1,...,x_n)\in ]0,1[^n,
\end{split}
\end{align}
with initial and Neumann boundary conditions 
\begin{align}\label{initial}
\begin{split}
\rho_0(0)&=0;\\
\rho_1(0,x_1)&=\zeta(x_1),\quad x_1\in[0,1];\\
\rho_n(0,x_1,...,x_n)&=0,\quad n> 1, (x_1,...,x_n)\in[0,1]^n;\\
\partial_{\nu}\rho_n(t,x_1,...,x_n)&=0,\quad n\geq 1, t\geq 0, (x_1,...,x_n)\in\partial [0,1]^n.
\end{split}
\end{align}
This is an infinite system of Fokker-Planck equations where the components have an increasing number of degrees of freedom (to account for all the possible numbers of particles in the system) and are coupled through the reaction mechanism \eqref{reaction}. The term
\begin{align*}
	\sum_{i=1}^n\partial^2_{x_i}\rho_n(t,x_1,...,x_n)
\end{align*}
in \eqref{equation} refers to the drift-less isotropic spatial diffusion; the terms
\begin{align*}
	(n+1)\int_0^1\lambda_d(y)\rho_{n+1}(t,x_1,...,x_n,y)dy-\sum_{i=1}^n\lambda_d(x_i)\rho_n(t,x_1,...,x_n) 
\end{align*}
formalize gain and loss, respectively, due to reaction (I), while
\begin{align*}
	\frac{1}{n}\sum_{i=1}^n\lambda_c(x_i)\rho_{n-1}(t,x_1,...,x_{i-1},x_{i+1},...,x_n)-\int_0^1\lambda_c(y)dy\cdot\rho_n(t,x_1,...,x_n)
\end{align*}
relate to reaction (II). The functions $\lambda_d,\lambda_c,\zeta:[0,1]\to\mathbb{R}$ in \eqref{equation}-\eqref{initial} are assumed to be non negative, bounded and measurable; in addition, $\zeta$ is smooth with $\zeta'(0)=\zeta'(1)=0$ and $\int_{[0,1]}\zeta(x)dx=1$. The symbol $\partial_{\nu}$ in \eqref{initial} stands for the directional derivative along the outer normal vector at the boundary of $[0,1]^n$. The particles are assumed to be indistinguishable thus entailing the symmetry of $p_n(t,x_1,...,x_n)$, and hence of $\rho_n(t,x_1,...,x_n)$, in all their space variables. We observe that by construction,
\begin{align*}
\int_{[0,1]^n}p_n(t,x_1,...,x_n)dx_1\cdot\cdot\cdot dx_n=1, 
\end{align*}
and hence
\begin{align}\label{s}
\int_{[0,1]^n}\rho_n(t,x_1,...,x_n)dx_1\cdot\cdot\cdot dx_n=\mathbb{P}(N(t)=n).
\end{align}
Therefore, since $\sum_{n\geq 0}\mathbb{P}(N(t)=n)=1$, the solution to \ref{equation} should fulfil
\begin{align}\label{mass preservation}
\sum_{n\geq 0}\int_{[0,1]^n}\rho_n(t,x_1,...,x_n)dx_1\cdot\cdot\cdot dx_n=1.
\end{align}
But this is indeed the case; in fact, if we assume the functions $\lambda_d$ and $\lambda_c$ to be constant and we integrate out the spatial degree of freedom in \eqref{equation}, we see that the diffusive part vanishes by virtue of Gauss theorem combined with the Neumann boundary condition in \eqref{initial}; moreover, recalling \eqref{s} we get
\begin{align}\label{master}
	\begin{split}
\partial_t\mathbb{P}(N(t)=n)=&(n+1)\lambda_d\mathbb{P}(N(t)=n+1)-n\lambda_d\mathbb{P}(N(t)=n)\\
&+\lambda_c\mathbb{P}(N(t)=n-1)-\lambda_c\mathbb{P}(N(t)=n).
\end{split}
\end{align}
Equation \eqref{master} is exactly the chemical master equation as derived from the classical law of mass action for
reactions \eqref{reaction} in spatially well-mixed systems \cite{Gillespie}; this fact proves condition \eqref{mass preservation} and the consistency of model \eqref{equation}-\eqref{initial} with classical equations of chemical kinetics.

Aim of the present paper is to propose a novel method to analytically solve system \eqref{equation}-\eqref{initial}. Our idea consists in transforming the deterministic problem \eqref{equation}-\eqref{initial} into an equivalent stochastic one by using iterated It\^o integrals with respect to a one dimensional Brownian motion and the Wiener-It\^o chaos expansion theorem. Then, exploiting the Gaussian framework induced by the aforementioned It\^o integration, we obtain a single partial differential equation of Ornstein-Uhlenbeck type in infinitely many variables which is directly connected to the problem \eqref{equation}-\eqref{initial} via the so-called Stroock-Taylor formula: this provides an analytical representation for the solution of the original system. \\
The main steps of our approach are summarized as follows:
\begin{itemize}
	\item we assume the existence of a classical solution $\{\rho_n\}_{n\geq 0}$ for \eqref{equation}-\eqref{initial}; the continuity of $\rho_n(t,\cdot)$ together with its symmetry in the spatial variables imply the membership of $\rho_n(t,\cdot)$ to $L_s^2([0,1]^n)$, the space of symmetric square integrable functions;
	\item we It\^o-integrate all the space variables of $\rho_n(t,\cdot)$ with respect to a one dimensional Brownian motion $\{B_x\}_{x\in [0,1]}$; this produces a sequence of multiple It\^o integrals $\{I_n(\rho_n(t,\cdot))\}_{n\geq 0}$ which solves a new set of equations, equivalent to \eqref{equation} but expressed in terms of differential second quantization operators, Malliavin derivatives and their adjoints (see equation \eqref{equation 5} below);
	\item we define $\Phi(t):=\sum_{n\geq 0}I_n(\rho_n(t;\cdot))$; this is a generalized stochastic process, solution to a single infinite dimensional differential equation, see \eqref{SDE infinite} below, and whose kernels (from its Wiener-It\^o chaos expansion) are by construction the elements of the sequence $\{\rho_n\}_{n\geq 1}$. In this Gaussian framework one can rewrite the adjoint Malliavin derivative, which appears in the equation for $\{\Phi(t)\}_{t\geq 0}$, as a difference between multiplication operator and Malliavin derivative;
	\item using this transformation, the equation for $\{\Phi(t)\}_{t\geq 0}$ is reduced to a single parabolic equation of Ornstein-Uhlenbeck-type with infinitely many variables; the kernels of the solution to suitable finite dimensional projections of such equation provide, through the Stroock -Taylor formula, an analytic representation for the corresponding projections of the sequence $\{\rho_n\}_{n\geq 0}$ (see formula \eqref{formula} below).
\end{itemize}  

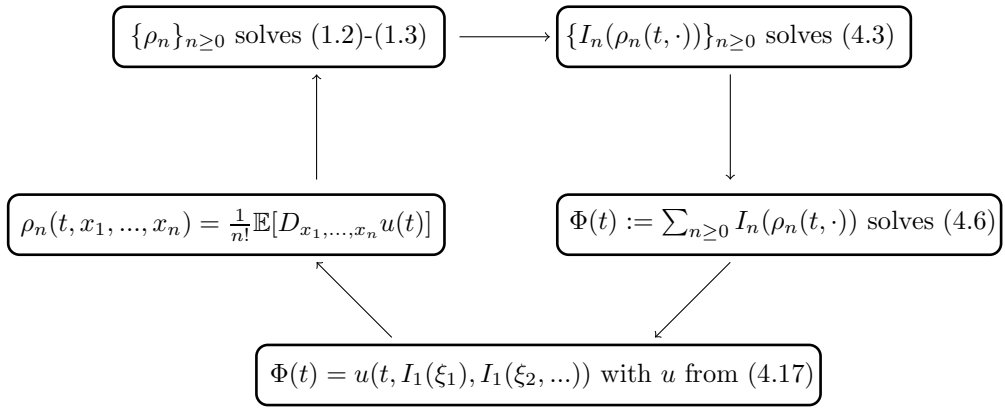
\begin{figure}
	
	\begin{center}
		
		\begin{tikzpicture}
			
			\draw[rounded corners=1ex,line width=1pt] (-2.2,5.4) rectangle (2.2,4.6);
			\draw[rounded corners=1ex,line width=1pt] (3.7,5.4) rectangle (8.4,4.6);
			\draw[rounded corners=1ex,line width=1pt] (3.7,2.9) rectangle (9.6,2.1);
			\draw[rounded corners=1ex,line width=1pt] (-3.6,2.9) rectangle (2.2,2.1);
			\draw[rounded corners=1ex,line width=1pt] (-0.3,0.1) rectangle (7.2,0.9);
	
			\node at (0, 5) {$\{\rho_n\}_{n\geq 0}$ solves \eqref{equation}-\eqref{initial}};
			\node at (6, 5) {$\{I_n(\rho_n(t,\cdot))\}_{n\geq 0}$ solves \eqref{equation 5}};
			\node at (6.7, 2.5) {$\Phi(t):=\sum_{n\geq 0}I_n(\rho_n(t,\cdot))$ solves \eqref{SDE infinite}};
			\node at (-0.7, 2.5) {$\rho_n(t,x_1,...,x_n)=\frac{1}{n!}\mathbb{E}[D_{x_1,...,x_n}u(t)]$};
			\node at (3.5, 0.5) {$\Phi(t)=u(t,I_1(\xi_1),I_1(\xi_2,...))$ with $u$ from \eqref{last}};

			\draw [->,out=0,in=180] (2.4, 5) to (3.6,5);
			\draw [->,out=270,in=90] (6, 4.5) to (6,3.1);
			\draw [->,out=225,in=45] (6, 2) to (5,1);
			\draw [->,out=135,in=315] (1.5,1) to (0.5,2);
			\draw [->,out=90,in=270] (0.5, 3.1) to (0.5, 4.5);

		\end{tikzpicture}
		
	\end{center}
	
	\caption{Representation of $\{\rho_n\}_{n\geq 0}$ as Wiener-It\^o kernels of the stochastic process $\{\Phi(t)\}_{t\geq 0}$}
	\label{fig: diagram}
\end{figure}

Problem \eqref{equation}-\eqref{initial} corresponds to one particular instance of the class of problems formalized in the recent paper \cite{del Razo}. The aim of that work was to develop a general framework for stochastic particle-based reaction-diffusion processes in the form of an evolution equation for the probability density of the open system. We refer readers to the nice account, presented in \cite{del Razo}, of the vaste literature concerning the mathematical modelling of chemical and biochemical phenomena which combine diffusion and chemical reactions.\\
Compared to the model from \cite{del Razo} which system \eqref{equation}-\eqref{initial} refers to, we made some simplifying assumptions. In \cite{del Razo} the authors assume that the single particle can move in the open bounded region $\mathbb{X}\subset\mathbb{R}^3$, so that $\rho_n$ is defined on $[0,+\infty[\times\mathbb{X}^{\otimes n}$; in addition, they deal with a non necessarily isotropic diffusion with drift term. Here, for simplicity we focus on a one dimensional case, i.e. $\mathbb{X}=]0,1[$, and assume drift-less isotropic diffusion but our technique readily extends to the general case. \\
Contrary to the $L^1$-framework adopted in \cite{del Razo} and utilized to set up the Fock-space formalism for describing and analysing the problem, we restrict to the $L^2$-space (which is legitimate by assuming the existence of a classical solution for \eqref{equation}-\eqref{initial}) and fully exploit the Wiener-It\^o-Segal isomorphism between the symmetric Fock space based on a Hilbert space and the family of square integral Brownian functionals.   

To state our main result we introduce a few notations and a couple of standing assumptions. 

\begin{assumption}\label{assumption}
There exists an orthonormal basis $\{\xi_k\}_{k\geq 1}$ of $L^2([0,1])$ that diagonalizes the operator 
\begin{align}\label{A}
\mathcal{A}:=-\partial^2_x+\lambda_d(x),\quad x\in [0,1],
\end{align}
with homogenous Neumann boundary conditions. This means that for all $j,k\geq 1$ we have
\begin{align*}
\int_0^1\xi_k(y)\xi_j(y)dy=\delta_{kj},\quad\xi_k'(0)=\xi_k'(1)=0,
\end{align*}
and there exists a sequence of non negative real numbers $\{\alpha_k\}_{k\geq 1}$ such that
\begin{align*}
\mathcal{A}\xi_k=\alpha_k\xi_k,\quad\mbox{ for all $k\geq 1$}.
\end{align*}
\end{assumption}

\begin{remark}
We note that according to the classical Sturm-Liouville boundary value problem the continuity of $\lambda_d$ is sufficient for Assumption \ref{assumption} to hold true.   
\end{remark}

We now denote by $\Pi_N:L^2([0,1])\to L^2([0,1])$ the orthogonal projection onto the finite dimensional space spanned by $\{\xi_1,...,\xi_N\}$, i.e. 
\begin{align*}
\Pi_Nf(x):=\sum_{k=1}^N\langle f,\xi_k\rangle_{L^2([0,1])}\xi_k(x),\quad x\in [0,1];
\end{align*}
we also set
\begin{align}\label{notation lambda}
d_k:=\langle \lambda_d,\xi_k\rangle_{L^2([0,1])},\quad c_k:=\langle \lambda_c,\xi_k\rangle_{L^2([0,1])},\quad \gamma:=\int_0^1\lambda_c(y)dy,\quad\mbox{ and }\quad\zeta_k:=\langle \zeta,\xi_k\rangle_{L^2([0,1])},
\end{align} 
where the functions $\lambda_d$, $\lambda_c$ and $\zeta$ are those from \eqref{equation}-\eqref{initial}.

\begin{assumption}\label{assumption 2}
There exists $N_0\geq 1$ such that $\Pi_{N_0}\lambda_d=\lambda_d$; this is equivalent to say $\Pi_{N}\lambda_d=\lambda_d$ for all $N\geq N_0$.
\end{assumption}

\begin{remark}
Assumption \eqref{assumption 2} is readily fulfilled in the case of a constant function $\lambda_d(x)=\lambda_d 1(x), x\in [0,1]$; in fact, in this case
\begin{align*}
(\mathcal{A}f)(x)=-f''(x)+\lambda_d f(x),\quad\quad \xi_k(x)=\cos((k-1)\pi x),\quad k\geq 1,
\end{align*} 	
and 
\begin{align*}
\alpha_k=(k-1)^2\pi^2+\lambda_d,\quad k\geq 1.
\end{align*}
This gives
\begin{align*}
\xi_1(x)=1(x)\quad\mbox{ and hence }\quad (\Pi_1\lambda_d)(x)=\lambda_d(x),
\end{align*}
i.e. $N_0=1$.
\end{remark}

We are now ready to state the main result of the present paper; its proof is postponed to Section \ref{proof main result} but a direct verification of its validity is presented in Section \ref{verification} for $n=0,1,2$. In the sequel we set $\Pi_N^{\otimes n}$ to be the orthogonal projection from $L^2([0,1]^n)$ to the linear space generated by the functions $\{\xi_{i_1}\otimes\cdot\cdot\cdot\otimes\xi_{i_n}, 1\leq i_1,...,i_n\leq N\}$.

\begin{theorem}\label{main theorem}
Let Assumptions \ref{assumption}-\ref{assumption 2} be in force and denote by $\{\rho_n\}_{n\geq 0}$ a classical solution of equation \eqref{equation}-\eqref{initial}. Then, for any $N\geq N_0$ and $t\geq 0$ we have the representation
\begin{align}\label{formula n=0}
	\rho_0(t)=\mathbb{E}[u(t,Z)],
\end{align}	
and for any $n\geq 1$ and $(x_1,...,x_n)\in [0,1]^n$,
\begin{align}\label{formula}
\Pi_N^{\otimes n}\rho_n(t,x_1,...,x_n)=\frac{1}{n!}\sum_{j_1,...j_n=1}^N\mathbb{E}\left[\left(\partial_{z_{j_1}}\cdot\cdot\cdot \partial_{z_{j_n}}u\right)(t,Z)\right]\xi_{j_1}(x_1)\cdot\cdot\cdot\xi_{j_n}(x_n).
\end{align}
Here, 
\begin{align}\label{formula-expectation }
\mathbb{E}\left[\left(\partial_{z_{j_1}}\cdot\cdot\cdot \partial_{z_{j_n}}u\right)(t,Z)\right]=\int_{\mathbb{R}^N}\left(\partial_{z_{j_1}}\cdot\cdot\cdot \partial_{z_{j_n}}u\right)(t,z)(2\pi)^{-N/2}e^{-\frac{|z|^2}{2}}dz,
\end{align}	
while $u:[0,+\infty[\times\mathbb{R}^N\to\mathbb{R}$ is a classical solution of the partial differential equation
\begin{align}\label{PDE}
\begin{split}
\partial_tu(t,z)=&\sum_{k=1}^N\alpha_k\partial^2_{z_k}u(t,z)+\sum_{k=1}^N\left(d_k-c_k-\alpha_k z_k\right)\partial_{z_k}u(t,z)+\left(\sum_{k=1}^Nc_kz_k-\gamma\right)u(t,z)\\
u(0,z)=&\sum_{k=1}^N\zeta_kz_k,\quad t\geq 0, z\in\mathbb{R}^N.
\end{split}
\end{align}
\end{theorem}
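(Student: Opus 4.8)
The plan is to lift the deterministic system \eqref{equation}--\eqref{initial} into Gaussian space through the multiple It\^o integral, recognise the resulting dynamics as an Ornstein-Uhlenbeck flow, and then read off the representation from the Stroock-Taylor formula, passing to the $N$-dimensional spectral projection only at the very end. First I would fix the random variables $W_k:=I_1(\xi_k)=\int_0^1\xi_k(y)\,dB_y$, which are i.i.d.\ standard Gaussians because $\{\xi_k\}$ is orthonormal, and recall that by the Wiener-It\^o-Segal isomorphism every square integrable Brownian functional $F$ has a chaos expansion whose kernels are recovered by Stroock-Taylor, $F=\sum_{n\geq 0}I_n\!\big(\tfrac1{n!}\mathbb{E}[D^nF]\big)$. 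Setting $\Phi(t):=\sum_{n\geq 0}I_n(\rho_n(t,\cdot))$ this gives at once $\rho_n(t,\cdot)=\tfrac1{n!}\mathbb{E}[D_{x_1,\dots,x_n}\Phi(t)]$, so the theorem reduces to producing a tractable representation of $\Phi(t)$.

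Next I would apply $I_n$ to each line of \eqref{equation} and match the five groups of terms with Gaussian-analytic operators. The crucial algebraic observation is that the diffusion and the degradation-loss terms must be merged first: since $\partial^2_x=\lambda_d-\mathcal{A}$ with $\mathcal A$ as in \eqref{A}, the combination $\sum_i\partial^2_{x_i}\rho_n-\sum_i\lambda_d(x_i)\rho_n$ collapses to $-\sum_i\mathcal A_{x_i}\rho_n$, whose image under $I_n$ is the differential second quantization $d\Gamma(-\mathcal A)$. Using $\mathcal A\xi_k=\alpha_k\xi_k$ and that the number operator in the $k$-th variable acts as $-\partial^2_{z_k}+z_k\partial_{z_k}$ on Hermite polynomials, this operator becomes $\sum_k\alpha_k(\partial^2_{z_k}-z_k\partial_{z_k})$. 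The degradation-gain term becomes the directional Malliavin derivative $\int_0^1\lambda_d(y)D_y\Phi\,dy$ (from $D_yI_{n+1}(g)=(n+1)I_n(g(\cdot,y))$), the creation-gain term becomes the divergence $\delta(\lambda_c\Phi)$ (since $\delta(\lambda_c I_{n-1}(g))=I_n(\lambda_c\widetilde\otimes g)$ and $\tfrac1n\sum_i\lambda_c(x_i)\rho_{n-1}(\dots)$ is exactly the symmetric tensor product $\lambda_c\widetilde\otimes\rho_{n-1}$), and the creation-loss term is simply $-\gamma\Phi$. Summing over $n$ produces the single equation for $\Phi$ announced as \eqref{SDE infinite}.

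The Ornstein-Uhlenbeck structure then emerges by rewriting the adjoint as multiplication minus derivative, $\delta(\lambda_c\Phi)=I_1(\lambda_c)\Phi-\int_0^1\lambda_c(y)D_y\Phi\,dy$. Making the ansatz $\Phi(t)=u(t,W_1,W_2,\dots)$ and applying the chain rule, $D_y\Phi=\sum_k\partial_{z_k}u\,\xi_k(y)$, so that $\int_0^1\lambda_d(y)D_y\Phi\,dy=\sum_k d_k\partial_{z_k}u$, $\int_0^1\lambda_c(y)D_y\Phi\,dy=\sum_k c_k\partial_{z_k}u$, and $I_1(\lambda_c)=\sum_k c_kW_k$ evaluates to $\sum_k c_kz_k$; collecting all contributions reproduces the infinite-variable equation \eqref{last}, whose $N$-dimensional truncation is precisely \eqref{PDE}. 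Differentiating the ansatz $n$ times gives $D_{x_1,\dots,x_n}u(t,W)=\sum_{k_1,\dots,k_n}(\partial_{z_{k_1}}\cdots\partial_{z_{k_n}}u)(t,W)\,\xi_{k_1}(x_1)\cdots\xi_{k_n}(x_n)$, and inserting this into the Stroock-Taylor identity yields \eqref{formula} once the indices are restricted by $\Pi_N^{\otimes n}$; the case $n=0$ is $\rho_0(t)=\mathbb{E}[\Phi(t)]=\mathbb{E}[u(t,Z)]$, which is \eqref{formula n=0}.

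The step I expect to demand the most care is the passage to the finite-dimensional projection, and this is exactly where Assumptions \ref{assumption}-\ref{assumption 2} enter. Projecting with $\Pi_N^{\otimes n}$ must keep the system $\{\Pi_N^{\otimes n}\rho_n\}_n$ closed: the operator $d\Gamma(-\mathcal A)$ commutes with $\Pi_N$ because $\Pi_N$ projects onto eigenspaces of $\mathcal A$ (whereas neither $\sum_i\partial^2_{x_i}$ nor $\sum_i\lambda_d(x_i)$ commutes with $\Pi_N$ separately, which is the reason the two must be merged), and the creation term automatically contributes only $\Pi_N\lambda_c$. The one genuinely dangerous term is the degradation gain, which contracts the last variable of $\rho_{n+1}$ against $\lambda_d$ and would otherwise couple $\Pi_N^{\otimes n}\rho_n$ to components of $\rho_{n+1}$ outside the projected space; Assumption \ref{assumption 2}, giving $d_k=0$ for $k>N_0$ with $N\geq N_0$, is precisely what removes this leakage and makes \eqref{PDE} a bona fide $N$-variable equation. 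A secondary technical point is that $\Phi(t)$ is only a generalized, not necessarily $L^2$, process, so the chaos series need not converge in $L^2(\Omega)$; I would circumvent this by running the whole argument at the level of the projections, where $u(t,W_1,\dots,W_N)$ is an honest random variable, and by invoking uniqueness for the truncated linear system to identify its chaos kernels with $\Pi_N^{\otimes n}\rho_n$.
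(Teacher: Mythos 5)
Your proposal is correct and follows essentially the same route as the paper: It\^o-integrating the system to obtain the equation driven by $d\Gamma(-\mathcal{A})$, $D_{\lambda_d}$, $D^{\star}_{\lambda_c}$ and $-\gamma$, summing into a generalized process $\Phi(t)$, converting $D^{\star}_{\lambda_c}$ into multiplication minus derivative via the Gaussian commutation identity, reducing to the Ornstein--Uhlenbeck PDE, and recovering the kernels by Stroock--Taylor, with Assumption \ref{assumption 2} playing exactly the role you identify for the degradation-gain term. Even your closing remark about running the rigorous argument at the level of the $\Pi_N$-projections (where everything is a polynomial in the finitely many Gaussians $I_1(\xi_1),\dots,I_1(\xi_N)$) mirrors the paper's actual order of operations, which projects first and only then passes to the finite-dimensional PDE.
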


The paper is organized as follows: in the next section we describe how to verify the validity of formula \eqref{formula} through a direct computation: this should help the reader in understanding the mechanism that relates \eqref{equation}-\eqref{initial} to \eqref{PDE}; Section 3 describes the Gaussian setting needed to formalize our approach: here we recall few basic ideas and tools from Malliavin calculus and infinite dimensional Gaussian analysis; lastly, in Section 4 we prove formula \eqref{formula}, passing through several intermediate steps that illustrate the main ideas of our technique.  

\section{Verification of formula \eqref{formula n=0}-\eqref{formula} for $n=0,1,2$}\label{verification}

The aim of this section is to show via a direct verification the validity of formula \eqref{formula n=0}-\eqref{formula}. This will be done only for $n=0,1,2$ and serves as an illustration of the connection between \eqref{equation}-\eqref{initial} and \eqref{PDE}. First of all, using the notation \eqref{A} we rewrite \eqref{equation} as
\begin{align}\label{equation 2}
	\begin{split}
		\partial_t\rho_0(t)=&\int_0^1\lambda_d(y)\rho_1(t,y)dy-\gamma\rho_0(t);\\
		\partial_t\rho_n(t,x_1,...,x_n)=&-\sum_{i=1}^n\mathcal{A}_i\rho_n(t,x_1,...,x_n)\\
		&+(n+1)\int_0^1\lambda_d(y)\rho_{n+1}(t,x_1,...,x_n,y)dy\\
		&+\frac{1}{n}\sum_{i=1}^n\lambda_c(x_i)\rho_{n-1}(t,x_1,...,x_{i-1},x_{i+1},...,x_n)\\
		&-\gamma\rho_n(t,x_1,...,x_n).
	\end{split}
\end{align}
Then, we find the equation solved by $\{\Pi_N^{\otimes n}\rho_n\}_{n\geq 0}$.
 
\begin{proposition}
Let Assumptions \ref{assumption}-\ref{assumption 2} be in force and denote by $\{\rho_n\}_{n\geq 0}$ a classical solution of equation \eqref{equation 2}-\eqref{initial}. Then, for any $N\geq N_0$ the sequence $\{\Pi_N^{\otimes n}\rho_n\}_{n\geq 0}$ solves
\begin{align}\label{equation projected}
\begin{split}
\partial_t\rho_0(t)=&\int_0^1\lambda_d(y)\Pi_N\rho_1(t,y)dy-\gamma\rho_0(t);\\
\partial_t\Pi_N^{\otimes n}\rho_n(t,x_1,...,x_n)=&-\sum_{j_1,...j_n=1}^N\left(\sum_{i=1}^n\alpha_{j_i}\right)\langle \rho_n(t,\cdot),\xi_{j_1}\otimes\cdot\cdot\cdot\otimes\xi_{j_n}\rangle_{L^2([0,1]^n)}\xi_{j_1}(x_1)\cdot\cdot\cdot\xi_{j_n}(x_n)\\
&+(n+1)\int_0^1\lambda_d(y)\Pi_N^{\otimes (n+1)}\rho_{n+1}(t,x_1,...,x_n,y)dy\\
&+\frac{1}{n}\sum_{i=1}^n\Pi_N\lambda_c(x_i)\Pi_N^{\otimes (n-1)}\rho_{n-1}(t,x_1,...,x_{i-1},x_{i+1},...,x_n)\\
&-\gamma\Pi_N^{\otimes n}\rho_n(t,x_1,...,x_n).
\end{split}
\end{align}
Here, we set $\Pi_N^{\otimes 0}\rho_0:=\rho_0$.
\end{proposition}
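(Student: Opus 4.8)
The plan is to apply the orthogonal projection $\Pi_N^{\otimes n}$ to both sides of each equation in \eqref{equation 2} and to identify the four resulting contributions with the corresponding terms of \eqref{equation projected}, handling the diffusion, gain, creation and linear-decay terms separately. Since $\Pi_N^{\otimes n}$ is a bounded, linear, time-independent operator, it commutes with $\partial_t$, so the left-hand side becomes $\partial_t\Pi_N^{\otimes n}\rho_n$ at once, and it remains only to push $\Pi_N^{\otimes n}$ through each term on the right.

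For the diffusion term $-\sum_{i=1}^n\mathcal{A}_i\rho_n$ I would expand $\rho_n(t,\cdot)$ in the orthonormal basis $\{\xi_{k_1}\otimes\cdots\otimes\xi_{k_n}\}$ of $L_s^2([0,1]^n)$ and use Assumption \ref{assumption}, namely $\mathcal{A}\xi_k=\alpha_k\xi_k$, so that $\mathcal{A}_i(\xi_{k_1}\otimes\cdots\otimes\xi_{k_n})=\alpha_{k_i}\,\xi_{k_1}\otimes\cdots\otimes\xi_{k_n}$. Applying $\Pi_N^{\otimes n}$ then restricts the expansion to indices $1\le k_1,\dots,k_n\le N$ and reproduces the first term of \eqref{equation projected}. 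The delicate point — and the step I expect to be the main obstacle — is to justify that the Fourier coefficient $\langle\mathcal{A}_i\rho_n(t,\cdot),\xi_{j_1}\otimes\cdots\otimes\xi_{j_n}\rangle$ equals $\alpha_{j_i}\langle\rho_n(t,\cdot),\xi_{j_1}\otimes\cdots\otimes\xi_{j_n}\rangle$: this requires integrating by parts twice in the $i$-th variable and discarding the boundary contributions, which vanish precisely because both $\rho_n(t,\cdot)$, through the Neumann condition in \eqref{initial}, and $\xi_{j_i}$, through $\xi_{j_i}'(0)=\xi_{j_i}'(1)=0$, satisfy homogeneous Neumann conditions; equivalently, one uses that $\mathcal{A}$ is symmetric on its Neumann domain so that it may be transferred from $\rho_n$ onto $\xi_{j_i}$.

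The remaining terms are more routine. For the gain term $(n+1)\int_0^1\lambda_d(y)\rho_{n+1}(t,\cdot,y)dy$ I would write, using Assumption \ref{assumption 2}, $\lambda_d=\sum_{m=1}^N d_m\xi_m$, compute the Fourier coefficients against $\xi_{k_1}\otimes\cdots\otimes\xi_{k_n}$ by Fubini, and observe that the sum over the contracted index truncates at $N$ precisely because $d_m=\langle\lambda_d,\xi_m\rangle=0$ for $m>N$; this is exactly where Assumption \ref{assumption 2} enters, and it shows that applying $\Pi_N^{\otimes n}$ to $\int_0^1\lambda_d(y)\rho_{n+1}(t,\cdot,y)dy$ yields $\int_0^1\lambda_d(y)\Pi_N^{\otimes(n+1)}\rho_{n+1}(t,\cdot,y)dy$. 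For the creation term, each summand $\lambda_c(x_i)\rho_{n-1}(t,x_1,\dots,x_{i-1},x_{i+1},\dots,x_n)$ is a product of a function of $x_i$ alone and a function of the remaining variables, so the tensorial projection $\Pi_N^{\otimes n}$ factorizes coordinatewise and returns $(\Pi_N\lambda_c)(x_i)$ times $\Pi_N^{\otimes(n-1)}\rho_{n-1}$; note that here no analogue of Assumption \ref{assumption 2} is invoked, consistently with the appearance of $\Pi_N\lambda_c$ rather than $\lambda_c$ in \eqref{equation projected}. Finally, the term $-\gamma\rho_n$ passes through by pure linearity.

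For the scalar equation ($n=0$) I would use the self-adjointness of $\Pi_N$ together with Assumption \ref{assumption 2}, namely $\int_0^1\lambda_d(y)\rho_1(t,y)dy=\langle\lambda_d,\rho_1(t,\cdot)\rangle=\langle\Pi_N\lambda_d,\rho_1(t,\cdot)\rangle=\langle\lambda_d,\Pi_N\rho_1(t,\cdot)\rangle=\int_0^1\lambda_d(y)\Pi_N\rho_1(t,y)dy$, which gives the first line of \eqref{equation projected}. Collecting the four contributions for $n\ge 1$ together with this scalar identity for $n=0$ completes the verification that $\{\Pi_N^{\otimes n}\rho_n\}_{n\ge 0}$ solves \eqref{equation projected}.
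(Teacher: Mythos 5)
Your proposal is correct and follows essentially the same route as the paper: project each equation of \eqref{equation 2} term by term, use the eigenrelation $\mathcal{A}\xi_k=\alpha_k\xi_k$ for the diffusion term, Assumption \ref{assumption 2} together with the self-adjointness of $\Pi_N$ for the degradation terms (including the $n=0$ equation), and the coordinatewise factorization of $\Pi_N^{\otimes n}$ for the creation term. The only difference is one of presentation: where the paper formally applies $\sum_{i=1}^n\mathcal{A}_i$ term by term to the basis expansion of $\rho_n$, you obtain the same identity for the Fourier coefficients by integrating by parts twice using the Neumann conditions satisfied by both $\rho_n$ and $\xi_{j_i}$, which is precisely the rigorous justification underlying that step.
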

	
\begin{proof}
Since $\{\rho_n\}_{n\geq 0}$ solves \eqref{equation 2}, we can write for $n\geq 1$ that
\begin{align}\label{AA}
	\begin{split}
\partial_t\Pi_N^{\otimes n}\rho_n(t,x_1,...,x_n)=&\Pi_N^{\otimes n}\partial_t\rho_n(t,x_1,...,x_n)\\
=&-\Pi_N^{\otimes n}\sum_{i=1}^n\mathcal{A}_i\rho_n(t,x_1,...,x_n)\\
&+(n+1)\int_0^1\lambda_d(y)\Pi_N^{\otimes n}\rho_{n+1}(t,x_1,...,x_n,y)dy\\
&+\frac{1}{n}\sum_{i=1}^n\Pi_N^{\otimes n}\left[\lambda_c(x_i)\rho_{n-1}(t,x_1,...,x_{i-1},x_{i+1},...,x_n)\right]\\
&-\gamma\Pi_N^{\otimes n}\rho_n(t,x_1,...,x_n)\\
=&-\Pi_N^{\otimes n}\sum_{i=1}^n\mathcal{A}_i\rho_n(t,x_1,...,x_n)\\
&+(n+1)\int_0^1\lambda_d(y)\Pi_N^{\otimes n}\rho_{n+1}(t,x_1,...,x_n,y)dy\\
&+\frac{1}{n}\sum_{i=1}^n\Pi_N\lambda_c(x_i)\Pi_N^{\otimes (n-1)}\rho_{n-1}(t,x_1,...,x_{i-1},x_{i+1},...,x_n)\\
&-\gamma\Pi_N^{\otimes n}\rho_n(t,x_1,...,x_n).
\end{split}
\end{align}
We now observe that
\begin{align*}
\sum_{i=1}^n\mathcal{A}_i\rho_n(t,x_1,...,x_n)=&\sum_{i=1}^n\mathcal{A}_i\sum_{j_1,...,j_n\geq 1}\langle \rho_n(t,\cdot),\xi_{j_1}\otimes\cdot\cdot\cdot\otimes\xi_{j_n}\rangle_{L^2([0,1]^n)}\xi_{j_1}(x_1)\cdot\cdot\cdot\xi_{j_n}(x_n)\\
=&\sum_{j_1,...,j_n\geq 1}\left(\sum_{i=1}^n\alpha_{j_i}\right)\langle \rho_n(t,\cdot),\xi_{j_1}\otimes\cdot\cdot\cdot\otimes\xi_{j_n}\rangle_{L^2([0,1]^n)}\xi_{j_1}(x_1)\cdot\cdot\cdot\xi_{j_n}(x_n),
\end{align*}
and hence 
\begin{align*}
&\Pi_N^{\otimes n}\sum_{i=1}^n\mathcal{A}_i\rho_n(t,x_1,...,x_n)\\
&\quad=\sum_{j_1,...,j_n=1}^N\left(\sum_{i=1}^n\alpha_{j_i}\right)\langle \rho_n(t,\cdot),\xi_{j_1}\otimes\cdot\cdot\cdot\otimes\xi_{j_n}\rangle_{L^2([0,1]^n)}\xi_{j_1}(x_1)\cdot\cdot\cdot\xi_{j_n}(x_n).
\end{align*}
Moreover, according to Assumption \ref{assumption 2} we have
\begin{align*}
\int_0^1\lambda_d(y)\Pi_N^{\otimes n}\rho_{n+1}(t,x_1,...,x_n,y)dy=&\int_0^1\Pi_N\lambda_d(y)\Pi_N^{\otimes n}\rho_{n+1}(t,x_1,...,x_n,y)dy\\
=&\int_0^1\lambda_d(y)\Pi_N^{\otimes (n+1)}\rho_{n+1}(t,x_1,...,x_n,y)dy.
\end{align*}
If we employ these two last facts in \eqref{AA}, we arrive at \eqref{equation projected} for $n\geq 1$. Similarly, the equation for $n=0$ can be derived by virtue of Assumption \ref{assumption 2}:  
\begin{align*}
\partial_t\rho_0(t)=&\int_0^1\lambda_d(y)\rho_1(t,y)dy-\gamma\rho_0(t)\\
=&\int_0^1\Pi_N\lambda_d(y)\rho_1(t,y)dy-\gamma\rho_0(t)\\
=&\int_0^1\lambda_d(y)\Pi_N\rho_1(t,y)dy-\gamma\rho_0(t).
\end{align*}  
\end{proof}

\begin{remark}
	The initial and boundary conditions for the sequence $\{\Pi_N^{\otimes n}\rho_n\}_{n\geq 0}$ are easily deduced from \eqref{initial} and the corresponding boundary conditions for $\{\xi_k\}_{k\geq 1}$; more precisely,
	\begin{align}\label{initial projected}
		\begin{split}
			\rho_0(0)&=0;\\
			\Pi_N\rho_1(0,x_1)&=\Pi_N\zeta(x_1),\mbox{ for all $x_1\in[0,1]$};\\
			\Pi_N^{\otimes n}\rho_n(0,x_1,...,x_n)&=0,\quad\mbox{for all $n> 1$ and all $(x_1,...,x_n)\in[0,1]^n$};\\
			\emph{grad }(\Pi_N^{\otimes n}\rho_n)(t,x_1,...,x_n)&=0,\quad\mbox{for all $n\geq 1$, $t\geq 0$ and $(x_1,...,x_n)\in\partial [0,1]^n$}.
		\end{split}
	\end{align}
\end{remark}

\begin{remark}
It is useful to recall that the solution to the Cauchy problem
\begin{align}\label{PDE2}
\begin{split}
\partial_tu(t,z)&=\sum_{i=1}^N\alpha_i\partial^2_{z_i}u(t,z)+\sum_{i=1}^N\left(d_i-c_i-\alpha_i z_i\right)\partial_{z_i}u(t,z)+\left(\sum_{i=1}^Nc_iz_i-\gamma\right)u(t,z);\\
u(0,z)&=\sum_{i=1}^N\zeta_iz_i,
\end{split}
\end{align}
which is the key ingredient of formulas \eqref{formula n=0}-\eqref{formula}, admits the following Feynman-Kac representation (see for instance \cite{KS}):
\begin{align}\label{FK}
u(t,z)=\mathbb{E}\left[\left(\sum_{i=1}^N\zeta_iZ^{z_i}_i(t)\right)\exp\left\{\int_0^t\left(\sum_{i=1}^Nc_iZ^{z_i}_i(s)-\gamma\right)ds\right\}\right],\quad t\geq 0, z=(z_1,...,z_N)\in\mathbb{R}^N.
\end{align} 
Here, for $i\in\{1,...,N\}$, the stochastic process $\{Z_i^{z_i}(t)\}_{t\geq 0}$ is the unique strong solution of the mean-reverting Ornstein-Uhlenbeck stochastic differential equation
\begin{align}\label{OU}
dZ_i^{z_i}(t)=\left(d_i-c_i-\alpha_i Z^{z_i}_i(t)\right)dt+\sqrt{2\alpha_i}dW_i(t),\quad Z_i^{z_i}(0)=z_i, 	
\end{align}
with $\{W_1(t)\}_{t\geq 0}$,..., $\{W_N(t)\}_{t\geq 0}$ being independent one dimensional Brownian motions. It is well known that the solution to \eqref{OU} can be explicitly written for $\alpha_i>0$ as
\begin{align*}
Z^{z_i}_i(t)&=z_ie^{-\alpha_i t}+\frac{d_i-c_i}{\alpha_i}\left(1-e^{-\alpha_i t}\right)+\int_0^te^{-\alpha_i(t-s)}\sqrt{2\alpha_i}dW_i(s),
\end{align*}
and simply 
\begin{align*}
Z^{z_i}_i(t)&=z_i+(d_i-c_1)t,
\end{align*}
when $\alpha_i=0$. This shows that the function $z_i\mapsto Z^{z_i}_i(t)$ is almost surely affine and hence that, according to equation \eqref{FK}, for any $\tau>0$ there exist positive constant $m_1$ and $m_2$ such that
\begin{align}\label{bound}
|u(t,z)|\leq m_1 e^{m_2|z|},\quad\mbox{for all $t\in [0,\tau]$ and $z\in\mathbb{R}^N$}.
\end{align} 
This bound entails the finiteness of the expectation in \eqref{formula n=0}; since the same reasoning applies to the partial spatial derivatives of $u$ (they also satisfy an equation of the form \eqref{PDE2}), we conclude that the expectations in \eqref{formula} are well defined and finite as well. 
\end{remark}

We are now going to verify that the right hand sides of \eqref{formula n=0}-\eqref{formula} solve equation \eqref{equation projected} for $n=0,1,2$; to ease the reference to formula \eqref{formula n=0}-\eqref{formula}, we write down explicitly the first four terms:   
\begin{align}\label{sa}
\begin{split}
\rho_0(t)&=\mathbb{E}[u(t,Z)];\\
\Pi_N\rho_1(t,x_1)&=\sum_{j=1}^N\mathbb{E}\left[(\partial_{z_j}u)(t,Z)\right]\xi_j(x_1);\\
\Pi_N^{\otimes 2}\rho_2(t,x_1,x_2)&=\frac{1}{2}\sum_{j_1,j_2=1}^N\mathbb{E}\left[(\partial_{z_{j_2}}\partial_{z_{j_1}}u)(t,Z)\right]\xi_{j_1}(x_1)\xi_{j_2}(x_2);\\
\Pi_N^{\otimes 3}\rho_3(t,x_1,x_2,x_3)&=\frac{1}{6}\sum_{j_1,j_2,j_3=1}^N\mathbb{E}\left[(\partial_{z_{j_3}}\partial_{z_{j_2}}\partial_{z_{j_1}}u)(t,Z)\right]\xi_{j_1}(x_1)\xi_{j_2}(x_2)\xi_{j_3}(x_3).
\end{split}
\end{align}

\subsection{Equation for  $\rho_0$}

We have:
\begin{align*}
\partial_t\mathbb{E}[u(t,Z)]=&\mathbb{E}[(\partial_tu)(t,Z)]\\
=&\mathbb{E}\left[\sum_{k=1}^N\alpha_k\partial^2_{z_k}u(t,Z)+\sum_{k=1}^N\left(d_k-c_k-\alpha_k Z_k\right)\partial_{z_k}u(t,Z)+\left(\sum_{k=1}^Nc_kZ_k-\gamma\right)u(t,Z)\right]\\
=&\sum_{k=1}^N\alpha_k\mathbb{E}[\partial^2_{z_k}u(t,Z)]+\sum_{k=1}^N\mathbb{E}\left[\left(d_k-c_k-\alpha_k Z_k\right)\partial_{z_k}u(t,Z)\right]\\
&+\mathbb{E}\left[\left(\sum_{k=1}^Nc_kZ_k-\gamma\right)u(t,Z)\right].
\end{align*}
Observe that an integration by parts gives (the boundary term vanishes thanks to the bound \eqref{bound} applied to $\partial_{z_k}u(t,z)$)
\begin{align*}
\mathbb{E}[\partial^2_{z_k}u(t,Z)]=\mathbb{E}[Z_k\partial_{z_k}u(t,Z)];
\end{align*}
therefore,
\begin{align*}
	\partial_t\mathbb{E}[u(t,Z)]=&\sum_{k=1}^N\alpha_k\mathbb{E}[Z_k\partial_{z_k}u(t,Z)]+\sum_{k=1}^N\mathbb{E}\left[\left(d_k-c_k-\alpha_k Z_k\right)\partial_{z_k}u(t,Z)\right]\\
	&+\mathbb{E}\left[\left(\sum_{k=1}^Nc_kZ_k-\gamma\right)u(t,Z)\right]\\
	=&\sum_{k=1}^N\mathbb{E}\left[\left(d_k-c_k\right)\partial_{z_k}u(t,Z)\right]+\mathbb{E}\left[\left(\sum_{k=1}^Nc_kZ_k-\gamma\right)u(t,Z)\right]\\
=&\sum_{k=1}^Nd_k\mathbb{E}\left[\partial_{z_k}u(t,Z)\right]-\sum_{k=1}^Nc_k\mathbb{E}\left[\partial_{z_k}u(t,Z)\right]+\mathbb{E}\left[\left(\sum_{k=1}^Nc_kZ_k-\gamma\right)u(t,Z)\right].
\end{align*}
An additional integration by parts yields
\begin{align*}
\sum_{k=1}^Nc_k\mathbb{E}\left[\partial_{z_k}u(t,Z)\right]=\mathbb{E}\left[\left(\sum_{k=1}^Nc_kZ_k\right)u(t,Z)\right],
\end{align*}
and hence
\begin{align*}
	\partial_t\mathbb{E}[u(t,Z)]=&\sum_{k=1}^Nd_k\mathbb{E}\left[\partial_{z_k}u(t,Z)\right]-\gamma\mathbb{E}\left[u(t,Z)\right]\\
	=&\int_0^1\lambda_d(y)\left(\sum_{j=1}^N\mathbb{E}\left[(\partial_{z_j}u)(t,Z)\right]\xi_j(y)\right)dy-\gamma\mathbb{E}[u(t,Z)].
\end{align*}
This corresponds to equation \eqref{equation projected} for $n=0$, with $\rho_0(t)=\mathbb{E}[u(t,Z)]$ and 
\begin{align*}
\Pi_N\rho_1(t,x_1)=\sum_{j=1}^N\mathbb{E}\left[(\partial_{z_j}u)(t,Z)\right]\xi_j(x_1).
\end{align*}

\subsection{Equation for  $\Pi_N\rho_1$}

We have
\begin{align}\label{n=1 initial}
\partial_t\sum_{j=1}^N\mathbb{E}\left[(\partial_{z_j}u)(t,Z)\right]\xi_j(x_1)=\sum_{j=1}^N\mathbb{E}\left[(\partial_{z_j}\partial_tu)(t,Z)\right]\xi_j(x_1).
\end{align}
Now,
\begin{align*}
\mathbb{E}\left[(\partial_{z_j}\partial_tu)(t,Z)\right]=&\int_{\mathbb{R}^N}(\partial_{z_j}\partial_tu)(t,z)(2\pi)^{-N/2}e^{-\frac{|z|^2}{2}}dz\\
=&\int_{\mathbb{R}^N}\partial_{z_j}\left(\sum_{k=1}^N\alpha_k\partial^2_{z_k}u(t,z)+\sum_{k=1}^N\left(d_k-c_k-\alpha_k z_k\right)\partial_{z_k}u(t,z)\right)(2\pi)^{-N/2}e^{-\frac{|z|^2}{2}}dz\\
&+\int_{\mathbb{R}^N}\partial_{z_j}\left(\left(\sum_{k=1}^Nc_kz_k-\gamma\right)u(t,z)\right)(2\pi)^{-N/2}e^{-\frac{|z|^2}{2}}dz\\
=&\int_{\mathbb{R}^N}\left(\sum_{k=1}^N\alpha_k\partial_{z_j}\partial^2_{z_k}u(t,z)-\alpha_j\partial_{z_j}u(t,z)\right)(2\pi)^{-N/2}e^{-\frac{|z|^2}{2}}dz\\
&+\int_{\mathbb{R}^N}\left(\sum_{k=1}^N\left(d_k-c_k-\alpha_k z_k\right)\partial_{z_j}\partial_{z_k}u(t,z)\right)(2\pi)^{-N/2}e^{-\frac{|z|^2}{2}}dz\\
&+\int_{\mathbb{R}^N}\left(c_ju(t,z)+\left(\sum_{k=1}^Nc_kz_k-\gamma\right)\partial_{z_j}u(t,z)\right)(2\pi)^{-N/2}e^{-\frac{|z|^2}{2}}dz.
\end{align*}
An integration by parts with respect to $\partial_{z_k}$ in the first term of the fourth line above will produce the term 
\begin{align*}
\int_{\mathbb{R}^N}\left(\sum_{k=1}^N\alpha_k z_k\partial_{z_j}\partial_{z_k}u(t,z)\right)(2\pi)^{-N/2}e^{-\frac{|z|^2}{2}}dz,
\end{align*}
which is identical to one of the terms from the fifth line but opposite in sign. We can therefore write
\begin{align}
\begin{split}\label{n=1}
\mathbb{E}\left[(\partial_{z_j}\partial_tu)(t,Z)\right]=&\int_{\mathbb{R}^N}\left(-\alpha_j\partial_{z_j}u(t,z)+\sum_{k=1}^N\left(d_k-c_k\right)\partial_{z_j}\partial_{z_k}u(t,z)\right)(2\pi)^{-N/2}e^{-\frac{|z|^2}{2}}dz\\
&+\int_{\mathbb{R}^N}\left(c_ju(t,z)+\left(\sum_{k=1}^Nc_kz_k-\gamma\right)\partial_{z_j}u(t,z)\right)(2\pi)^{-N/2}e^{-\frac{|z|^2}{2}}dz.
\end{split}
\end{align}
Similarly, an integration by parts with respect to $\partial_{z_k}$ in the term
\begin{align*}
\int_{\mathbb{R}^N}\left(-\sum_{k=1}^Nc_k\partial_{z_j}\partial_{z_k}u(t,z)\right)(2\pi)^{-N/2}e^{-\frac{|z|^2}{2}}dz
\end{align*}
from the first line of \eqref{n=1} will give
\begin{align*}
	-\int_{\mathbb{R}^N}\left(\sum_{k=1}^Nc_kz_k\right)\partial_{z_j}u(t,z)(2\pi)^{-N/2}e^{-\frac{|z|^2}{2}}dz.
\end{align*}
hence cancelling the corresponding term from the second line in \eqref{n=1}. Summing up,
\begin{align*}
\mathbb{E}\left[(\partial_{z_j}\partial_tu)(t,Z)\right]=&\int_{\mathbb{R}^N}\left(-\alpha_j\partial_{z_j}u(t,z)+\sum_{k=1}^Nd_k\partial_{z_j}\partial_{z_k}u(t,z)\right)(2\pi)^{-N/2}e^{-\frac{|z|^2}{2}}dz\\
&+\int_{\mathbb{R}^N}\left(c_ju(t,z)-\gamma\partial_{z_j}u(t,z)\right)(2\pi)^{-N/2}e^{-\frac{|z|^2}{2}}dz\\
=&-\alpha_j\mathbb{E}[(\partial_{z_j}u)(t,Z)]+\sum_{k=1}^Nd_k\mathbb{E}\left[(\partial_{z_j}\partial_{z_k}u)(t,Z)\right]\\
&+c_j\mathbb{E}[u(t,Z)]-\gamma\mathbb{E}[(\partial_{z_j}u)(t,Z)].
\end{align*} 
We can now plug the last expression in \eqref{n=1 initial} to get
\begin{align*}
\partial_t\sum_{j=1}^N\mathbb{E}\left[(\partial_{z_j}u)(t,Z)\right]\xi_j(x_1)=&\sum_{j=1}^N\mathbb{E}\left[(\partial_{z_j}\partial_tu)(t,Z)\right]\xi_j(x_1)\\
=&-\sum_{j=1}^N\alpha_j\mathbb{E}[(\partial_{z_j}u)(t,Z)]\xi_j(x_1)+\sum_{j=1}^N\sum_{k=1}^Nd_k\mathbb{E}\left[(\partial_{z_j}\partial_{z_k}u)(t,Z)\right]\xi_j(x_1)\\
&+\sum_{j=1}^Nc_j\mathbb{E}[u(t,Z)]\xi_j(x_1)-\gamma\sum_{j=1}^N\mathbb{E}[(\partial_{z_j}u)(t,Z)]\xi_j(x_1)\\
=&-\sum_{j=1}^N\alpha_j\mathbb{E}[(\partial_{z_j}u)(t,Z)]\xi_j(x_1)+2\int_0^1\lambda_d(y)\Pi_N^{\otimes 2}\rho_2(t,x_1,y)dy\\
&+\Pi_N\lambda_c(x_1)\rho_0(t)-\gamma\Pi_N\rho_1(t,x_1).
\end{align*}
This corresponds to equation \eqref{equation projected} for $n=1$ with the prescriptions \eqref{sa}. 

\subsection{Equation for  $\Pi_N^{\otimes 2}\rho_2$}

We start as before with
\begin{align}
\begin{split}\label{n=2}
&\partial_t\left(\frac{1}{2}\sum_{j_1,j_2=1}^N\mathbb{E}\left[(\partial_{z_{j_2}}\partial_{z_{j_1}}u)(t,Z)\right]\xi_{j_1}(x_1)\xi_{j_2}(x_2)\right)\\
&=\quad\frac{1}{2}\sum_{j_1,j_2=1}^N\mathbb{E}\left[(\partial_{z_{j_2}}\partial_{z_{j_1}}\partial_tu)(t,Z)\right]\xi_{j_1}(x_1)\xi_{j_2}(x_2).
\end{split}
\end{align}
Now,
\begin{align*}
&\mathbb{E}\left[(\partial_{z_{j_2}}\partial_{z_{j_1}}\partial_tu)(t,Z)\right]\\
&\quad=\int_{\mathbb{R}^N}\partial_{z_{j_2}}\partial_{z_{j_1}}\partial_tu(t,z)(2\pi)^{-\frac{N}{2}}e^{-\frac{|z|^2}{2}}dz\\
&\quad=\int_{\mathbb{R}^N}\partial_{z_{j_2}}\partial_{z_{j_1}}\left(\sum_{k=1}^N\alpha_k\partial^2_{z_k}u(t,z)+\sum_{k=1}^N\left(d_k-c_k-\alpha_k z_k\right)\partial_{z_k}u(t,z)\right)(2\pi)^{-\frac{N}{2}}e^{-\frac{|z|^2}{2}}dz\\
&\quad\quad+\int_{\mathbb{R}^N}\partial_{z_{j_2}}\partial_{z_{j_1}}\left(\left(\sum_{k=1}^Nc_kz_k-\gamma\right)u(t,z)\right)(2\pi)^{-\frac{N}{2}}e^{-\frac{|z|^2}{2}}dz\\
&\quad=\sum_{k=1}^N\alpha_k\int_{\mathbb{R}^N}\partial_{z_{j_2}}\partial_{z_{j_1}}\partial^2_{z_k}u(t,z)(2\pi)^{-\frac{N}{2}}e^{-\frac{|z|^2}{2}}dz\\
&\quad\quad+\int_{\mathbb{R}^N}\left(-\alpha_{j_1}\partial_{z_{j_2}}\partial_{z_{j_1}}u(t,z)-\alpha_{j_2}\partial_{z_{j_1}}\partial_{z_{j_2}}u(t,z)\right)(2\pi)^{-\frac{N}{2}}e^{-\frac{|z|^2}{2}}dz\\
&\quad\quad+\int_{\mathbb{R}^N}\left(\sum_{k=1}^N\left(d_k-c_k-\alpha_k z_k\right)\partial_{z_{j_2}}\partial_{z_{j_1}}\partial_{z_k}u(t,z)\right)(2\pi)^{-\frac{N}{2}}e^{-\frac{|z|^2}{2}}dz\\
&\quad\quad+\int_{\mathbb{R}^N}\left(c_{j_1}\partial_{z_{j_2}}u(t,z)+c_{j_2}\partial_{z_{j_1}}u(t,z)+\left(\sum_{k=1}^Nc_kz_k-\gamma\right)\partial_{z_{j_2}}\partial_{z_{j_1}}u(t,z)\right)(2\pi)^{-\frac{N}{2}}e^{-\frac{|z|^2}{2}}dz.
\end{align*}
Integration by parts with respect to $\partial_{z_k}$ in 
\begin{align*}
\int_{\mathbb{R}^N}\partial_{z_{j_2}}\partial_{z_{j_1}}\partial^2_{z_k}u(t,z)(2\pi)^{-\frac{N}{2}}e^{-\frac{|z|^2}{2}}dz
\end{align*}
and 
\begin{align*}
\int_{\mathbb{R}^N}\left(-\sum_{k=1}^Nc_k\partial_{z_{j_2}}\partial_{z_{j_1}}\partial_{z_k}u(t,z)\right)(2\pi)^{-\frac{N}{2}}e^{-\frac{|z|^2}{2}}dz
\end{align*}
will simplify the expression to 
\begin{align*}
&\mathbb{E}\left[(\partial_{z_{j_2}}\partial_{z_{j_1}}\partial_tu)(t,Z)\right]\\
&\quad=\int_{\mathbb{R}^N}\left(-\alpha_{j_1}\partial_{z_{j_2}}\partial_{z_{j_1}}u(t,z)-\alpha_{j_2}\partial_{z_{j_1}}\partial_{z_{j_2}}u(t,z)\right)(2\pi)^{-\frac{N}{2}}e^{-\frac{|z|^2}{2}}dz\\
&\quad\quad+\sum_{k=1}^Nd_k\int_{\mathbb{R}^N}\partial_{z_{j_2}}\partial_{z_{j_1}}\partial_{z_k}u(t,z)(2\pi)^{-\frac{N}{2}}e^{-\frac{|z|^2}{2}}dz\\
&\quad\quad+\int_{\mathbb{R}^N}\left(c_{j_1}\partial_{z_{j_2}}u(t,z)+c_{j_2}\partial_{z_{j_1}}u(t,z)-\gamma\partial_{z_{j_2}}\partial_{z_{j_1}}u(t,z)\right)(2\pi)^{-\frac{N}{2}}e^{-\frac{|z|^2}{2}}dz\\
&\quad=-\left(\alpha_{j_1}+\alpha_{j_2}\right)\mathbb{E}\left[\left(\partial_{z_{j_2}}\partial_{z_{j_1}}u\right)(t,Z)\right]+\sum_{k=1}^Nd_k\mathbb{E}\left[\left(\partial_{z_{j_2}}\partial_{z_{j_1}}\partial_{z_k}u\right)(t,Z)\right]\\
&\quad\quad+c_{j_1}\mathbb{E}\left[(\partial_{z_{j_2}}u)(t,Z)\right]+c_{j_2}\mathbb{E}\left[(\partial_{z_{j_1}}u)(t,Z)\right]-\gamma\mathbb{E}\left[\left(\partial_{z_{j_2}}\partial_{z_{j_1}}u\right)(t,Z)\right]
\end{align*}
Therefore,
\begin{align*}
\partial_t\Pi_N^{\otimes 2}\rho_2(t,x_1,x_2)
=&\frac{1}{2}\sum_{j_1,j_2=1}^N\mathbb{E}\left[(\partial_{z_{j_2}}\partial_{z_{j_1}}\partial_tu)(t,Z)\right]\xi_{j_1}(x_1)\xi_{j_2}(x_2)\\
=&-\frac{1}{2}\sum_{j_1,j_2=1}^N\left(\alpha_{j_1}+\alpha_{j_2}\right)\mathbb{E}\left[\left(\partial_{z_{j_2}}\partial_{z_{j_1}}u\right)(t,Z)\right]\xi_{j_1}(x_1)\xi_{j_2}(x_2)\\
&+\frac{1}{2}\sum_{j_1,j_2=1}^N\sum_{k=1}^Nd_k\mathbb{E}\left[\left(\partial_{z_{j_2}}\partial_{z_{j_1}}\partial_{z_k}u\right)(t,Z)\right]\xi_{j_1}(x_1)\xi_{j_2}(x_2)\\
&+\frac{1}{2}\sum_{j_1,j_2=1}^N \left(c_{j_1}\mathbb{E}\left[(\partial_{z_{j_2}}u)(t,Z)\right]+c_{j_2}\mathbb{E}\left[(\partial_{z_{j_1}}u)(t,Z)\right]\right)\xi_{j_1}(x_1)\xi_{j_2}(x_2)\\
&-\frac{\gamma}{2}\sum_{j_1,j_2=1}^N\mathbb{E}\left[\left(\partial_{z_{j_2}}\partial_{z_{j_1}}u\right)(t,Z)\right]\xi_{j_1}(x_1)\xi_{j_2}(x_2)\\
=&-\frac{1}{2}\sum_{j_1,j_2=1}^N\left(\alpha_{j_1}+\alpha_{j_2}\right)\mathbb{E}\left[\left(\partial_{z_{j_2}}\partial_{z_{j_1}}u\right)(t,Z)\right]\xi_{j_1}(x_1)\xi_{j_2}(x_2)\\
&+3\int_0^1\lambda_d(y)\Pi_N^{\otimes 3}\rho_3(t,x_1,x_2,y)dy+(\Pi_N\lambda_c\hat{\otimes}\Pi_N\rho_1(t,\cdot))(x_1,x_2)\\
&-\gamma\Pi_N^{\otimes 2}\rho_2(t,x_1,x_2).
\end{align*}
This corresponds through identities \eqref{sa} to equation \eqref{equation projected} for $n=2$.

\section{Preliminary material}

In this section we introduce the framework utilized for proving our main theorem. For more details on these topics, we refer the reader to one of the books \cite{Bogachev}, \cite{Janson} and \cite{Nualart}.

\subsection{Wiener-It\^o chaos expansion}
Let $(\Omega,\mathcal{B},\mathbb{P})$ be the classical Wiener space over the interval $[0,1]$, i.e. $\Omega$ is the space of continuous functions defined on the interval $[0,1]$ and null at zero, $\mathcal{B}$ is the Borel $\sigma$-algebra of $\Omega$ induced by the supremum norm and $\mathbb{P}$ the Wiener measure on $(\Omega,\mathcal{B})$. We denote by 
\begin{align*}
\begin{split}
B_x&:\Omega\to\mathbb{R}\\
&\quad\omega\mapsto B_x(\omega):=\omega(x),\quad x\in [0,1],
\end{split}
\end{align*} 
the coordinate process which by construction is a one dimensional Brownian motion under $\mathbb{P}$. According to the Wiener-It\^o chaos expansion theorem, any random variable $\Phi$ in $\mathbb{L}^2(\Omega)$ can be uniquely represented as 
\begin{align}\label{WI}
\Phi=\sum_{n\geq 0}I_n(h_n),
\end{align}
where 
\begin{itemize}
\item $h_0:=\mathbb{E}[\Phi]$;
\item for $n\geq 1$, $h_n\in L_{s}^2([0,1]^n)$, the space of square integrable symmetric functions;
\item $I_0(h_0):=h_0=\mathbb{E}[\Phi]$;
\item for $n\geq 1$, $I_n(h_n)$ stands for the $n$-th order multiple It\^o integral defined as 
\begin{align*}
I_n(h_n):=n!\int_0^1\int_0^{x_1}\cdot\cdot\cdot\int_0^{x_{n-1}}h_n(x_1,...,x_n)dB_{x_n}\cdot\cdot\cdot dB_{x_2}dB_{x_1}.
\end{align*}
\end{itemize}
The series in \eqref{WI} provides an orthogonal decomposition of $\Phi$ that converges in $\mathbb{L}^2(\Omega)$; in fact, multiple It\^o integrals possess the following general properties:  
\begin{itemize}
	\item for all $n\geq 1$, $\mathbb{E}[I_n(h_n)]=0$;
	\item if $n\neq m$, then $\mathbb{E}[I_n(h_n)I_m(h_m)]=0$;
	\item for $n\geq 1$, $\mathbb{E}[I_n(h_n)^2]=n!|h_n|^2_{L^2([0,1]^n)}$.
\end{itemize}
From the last two identities we get
\begin{align*}
\mathbb{E}[\Phi^2]=\sum_{n\geq 0}n!|h_n|^2_{L^2([0,1]^n)},
\end{align*}
and, for $\Psi\in\mathbb{L}^2(\Omega)$ with
\begin{align}\label{WI Psi}
\Psi=\sum_{n\geq 0}I_n(g_n),
\end{align} 
that
\begin{align}\label{scalar product}
\mathbb{E}[\Phi \Psi]=\sum_{n\geq 0}n!\langle h_n,g_n\rangle_{L^2([0,1]^n)}.
\end{align}
Two notable subsets of $\mathbb{L}^2(\Omega)$ are
\begin{align*}
\mathtt{F}:=\left\{\sum_{n=0}^MI_n(h_n),\mbox{ for some $M\in\mathbb{N}\cup\{0\}$, $h_0\in\mathbb{R}$ and $h_n\in L_s^2([0,1]^n)$, $n=1,...,M$}\right\}, 
\end{align*}
which collects the random variables with a finite order chaos expansion, and 
\begin{align*}
\mathtt{E}:=\left\{\mathcal{E}(f):=\sum_{n\geq 0}I_n\left(\frac{f^{\otimes n}}{n!}\right),\mbox{ for some $f\in L^2([0,1])$}\right\},
\end{align*}
which is the family of the so-called \emph{stochastic exponentials}. It is well known that
\begin{align*}
\mathcal{E}(f)=\exp\left\{I_1(f)-\frac{1}{2}|f|^2_{L^2([0,1])}\right\}
\end{align*} 
and that $\mathtt{F}$ and the linear span of $\mathtt{E}$ are both dense in $\mathbb{L}^2(\Omega)$. In particular,
\begin{align*}
\mathbb{E}[\Phi Z]=\mathbb{E}[\Psi Z],\quad\mbox{for all $Z\in \mathtt{F}$},
\end{align*}
or
\begin{align*}
\mathbb{E}[\Phi\mathcal{E}(f)]=\mathbb{E}[\Psi\mathcal{E}(f)],\quad\mbox{for all $f\in L^2([0,1])$ (or some dense subset of $L^2([0,1])$)},
\end{align*}
implies $\Phi=\Psi$, $\mathbb{P}$-a.s.. Note also that, according to \eqref{scalar product}, we can write
\begin{align*}
\mathbb{E}[\Phi\mathcal{E}(f)]=\sum_{n\geq 0}\langle h_n,f^{\otimes n}\rangle_{L^2([0,1]^n)},
\end{align*}
whenever $\Phi=\sum_{n\geq 0}I_n(h_n)$. We recall in addition that, by virtue of the \emph{Hu-Meyer formula}
\begin{align}\label{HM}
I_n(h_n)\cdot I_m(h_m)=\sum_{r=0}^{n\wedge n}r!{n \choose r}{m \choose r}I_{n+m-2r}(h_n\hat{\otimes}_r h_m),
\end{align} 
the linear space $\mathtt{F}$ is an algebra with respect to the point-wise multiplication. Here, $h_n\otimes_r h_m$ stands for the \emph{$r$-th order contraction} of $h_n$ and $h_m$, i.e.
\begin{align}\label{contraction}
\begin{split}
&(h_n\otimes_r h_m)(x_1,....,x_{n+m-2r})\\
&:=\int_{[0,1]^r}h_n(x_1,...,x_{n-r},y_1,...,y_r)h_m(y_1,...,y_r,x_{n-r+1},...,x_{n+m-2r})dy_1\cdot\cdot\cdot dy_r,
\end{split}
\end{align}
while $h_n\hat{\otimes}_r h_m$ denotes the \emph{symmetrization} of $h_n\otimes_r h_m$, i.e.
\begin{align}\label{symm}
	\begin{split}
	&(h_n\hat{\otimes}_r h_m)(x_1,....,x_{n+m-2r})\\
	&:=\frac{1}{(n+m-2r)!}\sum_{\sigma\in S_{n+m-2r}}(h_n\otimes_r h_m)(x_{\sigma(1)},...,x_{\sigma(n+m-2r)}),
\end{split}
\end{align}
with  $S_{n+m-2r}$ being the group of permutations on $\{1,...,n+m-2r\}$.

\subsection{Malliavin derivative}

The \emph{Malliavin derivative} of $\Phi=\sum_{n=0}^MI_n(h_n)\in\mathtt{F}$, denoted $\{D_x\Phi\}_{x\in[0,1]}$, is the element of $L^2([0,1];\mathtt{F})$ defined by
\begin{align*}
D_x\Phi:=\sum_{n=0}^{M-1}(n+1)I_n(h_{n+1}(\cdot,x)), \quad x\in [0,1].
\end{align*}
For $l\in L^2([0,1])$ and $\Phi=\sum_{n=0}^MI_n(h_n)\in\mathtt{F}$, we also write
\begin{align}
\begin{split}
D_l\Phi:=\langle D\Phi,l\rangle_{L^2([0,1])}&=\sum_{n=0}^{M-1}(n+1)I_{n}\left(\int_0^1h_{n+1}(\cdot,y)l(y)dy\right)\\
&=\sum_{n=0}^{M-1}(n+1)I_{n}\left(h_{n+1}\otimes_1 l\right)
\end{split}
\end{align}
for the \emph{directional Malliavin derivative} of $\Phi$ along $l$ (in the last member above we utilized the notation \eqref{contraction}). We remark that $D_l\Phi$ is also a member of $\mathtt{F}$.\\
If we now take $l\in L^2([0,1])$, $\Phi=\sum_{n=0}^MI_n(h_n)\in\mathtt{F}$ and $\Psi=\sum_{n=0}^KI_n(g_n)\in\mathtt{F}$, we can write
\begin{align}\label{wick}
\mathbb{E}[D_l\Phi\cdot\Psi]&=\sum_{n=0}^{(M-1)\wedge K}n!(n+1)\langle h_{n+1}\otimes_1 l,g_n\rangle_{L^2([0,1]^n)}\nonumber\\
&=\sum_{n=0}^{(M-1)\wedge K}(n+1)!\langle h_{n+1},l\otimes g_n\rangle_{L^2([0,1]^{n+1})}\nonumber\\
&=\sum_{n=0}^{(M-1)\wedge K}(n+1)!\langle h_{n+1},l\hat{\otimes}g_n\rangle_{L^2([0,1]^{n+1})}\\
&=\sum_{n=1}^{M\wedge (K+1)}n!\langle h_{n},l\hat{\otimes}g_{n-1}\rangle_{L^2([0,1]^{n})}\nonumber\\
&=\mathbb{E}[\Phi\cdot D_l^{\star}\Psi]&\nonumber,
\end{align}
where
\begin{align*}
D^{\star}_l\Psi:=\sum_{n=1}^{K+1}I_n(l\hat{\otimes}g_{n-1})
\end{align*}
and
\begin{align*}
(l\hat{\otimes}g_{n-1})(x_1,...,x_n):=\frac{1}{n}\sum_{i=1}^nf(x_i)g_{n-1}(x_1,...,x_{i-1},x_{i+1},...,x_n)
\end{align*}
(compare with definitions \eqref{contraction} and \eqref{symm} for $r=0$). We remark that in \eqref{wick} we utilized the symmetry of $h_n$ and the fact that the symmetrization operator is idempotent and self-adjoint in $L^2([0,1]^n)$. \\
It is clear  that $D^{\star}_l\Psi$ also belongs to $\mathtt{F}$; moreover, $\mathbb{E}[D^{\star}_l\Psi]=0$, for all $l\in L^2([0,1])$ and $\Psi\in\mathtt{F}$.\\
If in the Hu-Meyer formula \eqref{HM} we take $m=1$ and set $g_1=l\in L^2([0,1])$, we get
\begin{align*}
I_n(h_n)\cdot I_1(l)&=I_{n+1}(h_n\hat{\otimes}l)+I_{n-1}(h_n\otimes_1 l)\\
&=D_{l}^{\star}I_n(h_n)+D_{l}I_n(h_n).
\end{align*}
Summing over $n$ and using the linearity of the operators $D^{\star}_l$ and $D_l$, we deduce the identity
\begin{align}\label{gaussian commutation}
D_l^{\star}\Psi+D_l\Psi=\Psi\cdot I_1(l),
\end{align}
which is valid for $\Psi\in\mathtt{F}$ and $l\in L^2([0,1])$. One can also introduce the adjoint of $D_x$, denoted $\delta$: if $\Phi(x)=\sum_{n=0}^MI_n(h_n(\cdot;x))$ is a stochastic process in $\mathtt{F}$, then
\begin{align*}
\delta(\Phi(\cdot)):=\sum_{n=0}^MI_{n+1}(\tilde{h}_n)\in\mathtt{F},
\end{align*}
where $\tilde{h}_n$ stands for the symmetrization of $h_n$ with respect to the $n+1$ variables $x_1,...,x_n,x$.\\
It is useful to mention that the definition of Malliavin derivative can also be extended to the members of the family $\mathtt{E}$: for any $f,l\in L^2([0,1])$, we have
\begin{align}\label{Malliavin for exp}
D_x\mathcal{E}(f)=f(x)\mathcal{E}(f), x\in [0,1]\quad\mbox{ and }\quad D_l\mathcal{E}(f)=\langle f,l\rangle_{L^2([0,1])}\mathcal{E}(f).
\end{align} 

\subsection{Second quantization operators}

Let $A:L^2([0,1])\to L^2([0,1])$ be a bounded linear operator; for $\Phi=\sum_{n=0}^MI_n(h_n)\in\mathtt{F}$ we define the \emph{second quantization operator} of $A$ as
\begin{align*}
\Gamma(A)\Phi:=\sum_{n=0}^MI_n\left(A^{\otimes n}h_n\right),
\end{align*}
and the \emph{differential second quantization operator} of $A$ as
\begin{align*}
d\Gamma(A)\Phi:=\sum_{n=1}^MI_n\left(\sum_{i=1}^nA_ih_n\right),
\end{align*}
where $A_i$ stands for the operator $A$ acting on the $i$-th variable of $h_n$. The boundedness of $A$ implies that both $\Gamma(A)\Phi$ and $d\Gamma(A)\Psi$ also belong to $\mathtt{F}$; note in addition that for $A$ being the identity, we recover from $d\Gamma(A)$ the well known \emph{number operator}:
\begin{align*}
\mathcal{N}\Phi=\sum_{n=1}^MnI_n\left(h_n\right).
\end{align*}
Via a simple verification on can see that for all $\Phi$ and $\Psi$ in $\mathtt{F}$ the following identities hold true:
\begin{align*}
	\begin{split}
&\mathbb{E}[\Gamma(A)\Phi]=\mathbb{E}[\Phi];\quad \mathbb{E}[d\Gamma(A)\Phi]=0;\\
&\mathbb{E}[\Gamma(A)\Phi\cdot\Psi]=\mathbb{E}[\Phi\cdot\Gamma(A^{\star})\Psi];\quad\mathbb{E}[d\Gamma(A)\Phi\cdot\Psi]=\mathbb{E}[\Phi\cdot d\Gamma(A^{\star})\Psi]. 
\end{split}
\end{align*}
Here, $A^{\star}$ denotes the adjoint of $A$ in $L^2([0,1])$. As for the Malliavin derivative, the actions of second quantization and differential second quantization operators can be extended to the class $\mathtt{E}$ of stochastic exponentials:
\begin{align*}
\Gamma(A)\mathcal{E}(f)=\mathcal{E}(Af)\quad\mbox{ and }\quad d\Gamma(A)\mathcal{E}(f)=D^{\star}_{Af}\mathcal{E}(f).
\end{align*}   
The differential second quantization operator can also be represented as a composition of the Malliavin derivative and its adjoint; more precisely,
\begin{align}\label{dsq}
d\Gamma(A)\Phi=\delta\left(AD_{\cdot}\Phi\right).
\end{align} 

\subsection{Space of generalized random variables}

We remark that the convergence of the series in \eqref{scalar product} is implied via the Cauchy-Schwartz inequality by the conditions
\begin{align}\label{finite series}
\sum_{n\geq 0}n!|h_n|^2_{L^2([0,1]^n)}<+\infty\quad\mbox { and }\quad\sum_{n\geq 0}n!|g_n|^2_{L^2([0,1]^n)}<+\infty.
\end{align}
However, if $\Psi$ has a finite order expansion, i.e.
\begin{align}\label{finite Psi}
\Psi=\sum_{n=0}^MI_n(g_n),\mbox{ for some $M\in\mathbb{N}\cup\{0\}$},
\end{align}
then one can drop the first condition in \eqref{finite series} and get a still well-defined pairing between the \emph{generalized} random variable $\Phi$, represented by the formal series $\sum_{n\geq 0}I_n(h_n)$ (which in general will not convergence in $\mathbb{L}^2(\Omega)$), and the \emph{regular} or \emph{test} random variable $\Psi$ with finite order expansion \eqref{finite Psi}. Let  
\begin{align*}
\mathtt{F}^{\star}:=\left\{\sum_{n\geq 0}I_n(h_n),\mbox{ for some $h_0\in\mathbb{R}$ and $h_n\in L_s^2([0,1]^n)$, $n\geq 1$}\right\} 
\end{align*}
be a family of \emph{generalized} random variables. The action of $T=\sum_{n\geq 0}I_n(h_n)\in\mathtt{F}^{\star}$ on $\varphi=\sum_{n=0}^MI_n(g_n)\in\mathtt{F}$ is defined as 
\begin{align*}
\langle\langle T,\varphi\rangle\rangle:=\sum_{n=0}^Mn!\langle h_n,g_n\rangle_{L^2([0,1]^n)}.
\end{align*}  
By construction, we have the inclusions
\begin{align*}
\mathtt{F}\subset\mathbb{L}^2(\Omega)\subset\mathtt{F}^{\star}
\end{align*}
with
\begin{align*}
\langle\langle T,\varphi\rangle\rangle=\mathbb{E}[T\varphi],
\end{align*}
whenever $T\in\mathbb{L}^2(\Omega)$. We will say that $T=U$ in $\mathtt{F}^{\star}$ if
\begin{align*}
\langle\langle T,\varphi\rangle\rangle=\langle\langle U,\varphi\rangle\rangle, \quad \mbox{for all }\varphi\in\mathtt{F}.
\end{align*}
The \emph{generalized expectation} of $T=\sum_{n\geq 0}I_n(h_n)\in\mathtt{F}^{\star}$ is $\mathbb{E}[T]:=\langle\langle T,1\rangle\rangle=h_0$. It is also important to observe that, according to the Hu-Meyer formula \eqref{HM}, the vector space $\mathtt{F}$ is closed with respect to the point-wise product between random variables. Therefore, if $T\in\mathtt{F}^{\star}$ and $\psi\in\mathtt{F}$, the product $T\cdot\psi$ is well defined and corresponds to the element of $\mathtt{F}^{\star}$ given by the prescription
\begin{align*}
\langle\langle T\cdot\psi,\varphi\rangle\rangle:=\langle\langle T,\psi\cdot\varphi\rangle\rangle, \quad\varphi\in\mathtt{F}.
\end{align*}
The definitions of Malliavin derivative, its adjoint and (differential) second quantization operators can be lifted from $\mathtt{F}$ to $\mathtt{F}^{\star}$ by duality:
\begin{align*}
&\langle\langle D_lT,\varphi\rangle\rangle:=\langle\langle T,D^{\star}_l\varphi\rangle\rangle,\quad \langle\langle D^{\star}_lT,\varphi\rangle\rangle:=\langle\langle T,D_l\varphi\rangle\rangle\\
&\langle\langle \Gamma(A)T,\varphi\rangle\rangle:=\langle\langle T,\Gamma(A^{\star})\varphi\rangle\rangle,\quad \langle\langle d\Gamma(A)T,\varphi\rangle\rangle:=\langle\langle T,d\Gamma(A^{\star})\varphi\rangle\rangle.
\end{align*}
Lastly, we recall a generalized version of the so-called Stroock-Taylor formula: if $T=\sum_{n\geq 0}I_n(h_n)\in\mathtt{F}^{\star}$, then
\begin{align}\label{ST}
h_n(x_1,...,x_n)=\frac{1}{n!}\mathbb{E}[D_{x_1,...,x_n}T], \quad (x_1,...,x_n)\in [0,1]^n.
\end{align}
Here, $\mathbb{E}[D_{x_1,...,x_n}T]$ stands for the the generalized expectation of the $n$-th order Malliavin derivative of $T$.

\begin{remark}
	The space $\mathtt{F}^{\star}$ has been already utilized for solving some stochastic partial differential equations which admits only generalized solutions. See for instance the paper \cite{MR} where the authors investigate an unbiased version of the stochastic Navier-Stokes equations.	
\end{remark}

\section{Proof of Theorem \ref{main theorem}}\label{proof main result}

In this section we present the rigorous derivation of formula \eqref{formula n=0}-\eqref{formula}. Firstly, we devote our attention to the original system, i.e. the one without projection operators $\Pi_N^{\otimes n}$, that we report here for easiness of reference:
\begin{align}\label{equation proof}
	\begin{split}
		\partial_t\rho_0(t)=&\int_0^1\lambda_d(y)\rho_1(t,y)dy-\gamma\rho_0(t);\\
		\partial_t\rho_n(t,x_1,...,x_n)=&-\sum_{i=1}^n\mathcal{A}_i\rho_n(t,x_1,...,x_n)\\
		&+(n+1)\int_0^1\lambda_d(y)\rho_{n+1}(t,x_1,...,x_n,y)dy\\
		&+\frac{1}{n}\sum_{i=1}^n\lambda_c(x_i)\rho_{n-1}(t,x_1,...,x_{i-1},x_{i+1},...,x_n)\\
		&-\gamma\rho_n(t,x_1,...,x_n),
	\end{split}
\end{align}
with initial and boundary conditions
\begin{align}\label{initial proof}
	\begin{split}
	\rho_0(0)&=0;\\
	\rho_1(0,x_1)&=\zeta(x_1),\quad x_1\in[0,1];\\
	\rho_n(0,x_1,...,x_n)&=0,\quad n> 1, (x_1,...,x_n)\in[0,1]^n;\\
	\partial_{\nu}\rho_n(t,x_1,...,x_n)&=0,\quad n\geq 1, t\geq 0, (x_1,...,x_n)\in\partial [0,1]^n.
	\end{split}
\end{align}

The first fundamental step of our analysis consists in integrating all the spatial variables of $\rho_n(t,x_1,...,x_n)$ with respect to the one dimensional Brownian motion $\{B_x\}_{x\in[0,1]}$; this procedure will produce a sequence of time-dependent multiple It\^o integrals which satisfies a stochastic counterpart of equation \eqref{equation proof}-\eqref{initial proof}.   

\begin{proposition}\label{proposition 1}
	Let $\{\rho_n\}_{n\geq 0}$ be a classical solution to \eqref{equation proof}-\eqref{initial proof}. Then, the sequence of random variables $\{I_n(\rho_n(t,\cdot))\}_{n\geq 0}$ satisfies the equations
	\begin{align}\label{equation 5}
	\begin{split}
	\partial_tI_n(\rho_n(t,\cdot))=&d\Gamma(-\mathcal{A})I_n(\rho_n(t,\cdot))+D_{\lambda_d}I_{n+1}\left(\rho_{n+1}(t,\cdot)\right)\\
	&+D^{\star}_{\lambda_c}I_{n-1}\left(\rho_{n-1}(t,\cdot)\right)-\gamma I_n(\rho_n(t,\cdot)),\quad t>0, n\geq 0;\\
	I_1(\rho_1(0,\cdot))=&I_1(\zeta);\\
	I_n(\rho_n(0,\cdot))=&0,\mbox{ for all $n\neq 1$},
	\end{split}
	\end{align} 
with probability one. Here, we agree on setting $I_{-1}(\cdot)\equiv 0$.
\end{proposition}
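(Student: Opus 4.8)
The plan is to take the sequence $\{\rho_n(t,\cdot)\}_{n\geq 0}$ solving \eqref{equation proof} and apply the multiple It\^o integral $I_n$ to the $n$-th equation, then rewrite each resulting term in the language of the operators introduced in Section 3. Since $\rho_n(t,\cdot)\in L_s^2([0,1]^n)$ by assumption, each $I_n(\rho_n(t,\cdot))$ is a well-defined element of $\mathbb{L}^2(\Omega)$, and because the $I_n$ are linear and (for fixed $t$) commute with the time derivative $\partial_t$ acting on the kernels, I would first justify the interchange $\partial_t I_n(\rho_n(t,\cdot))=I_n(\partial_t\rho_n(t,\cdot))$. This reduces the problem to matching, term by term, the image under $I_n$ of the right-hand side of \eqref{equation proof} against the right-hand side of \eqref{equation 5}.

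The heart of the proof is a dictionary between the four analytic terms in \eqref{equation proof} and the four operator terms in \eqref{equation 5}. First I would treat the diffusion-reaction term: applying $I_n$ to $-\sum_{i=1}^n\mathcal{A}_i\rho_n(t,\cdot)$ gives exactly $d\Gamma(-\mathcal{A})I_n(\rho_n(t,\cdot))$ directly from the definition of the differential second quantization operator $d\Gamma(A)\Phi=\sum_n I_n(\sum_i A_i h_n)$, with $A=-\mathcal{A}$ (here $-\mathcal{A}$ is bounded on the relevant symmetric subspace since the $\alpha_k$ are fixed once $N$ is, or one works formally in $\mathtt{F}^{\star}$). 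Second, the gain term $(n+1)\int_0^1\lambda_d(y)\rho_{n+1}(t,\cdot,y)dy$ must be identified with the directional Malliavin derivative: recalling $D_l\Phi=\sum_n (n+1)I_n(h_{n+1}\otimes_1 l)$ and that $h_{n+1}\otimes_1\lambda_d=\int_0^1\rho_{n+1}(t,\cdot,y)\lambda_d(y)dy$, this is precisely $D_{\lambda_d}I_{n+1}(\rho_{n+1}(t,\cdot))$, using the symmetry of $\rho_{n+1}$ so that contraction in any one variable yields the same kernel. Third, the creation term $\frac{1}{n}\sum_{i=1}^n\lambda_c(x_i)\rho_{n-1}(t,\dots)$ is recognized as $\lambda_c\hat{\otimes}\rho_{n-1}(t,\cdot)$, which under $I_n$ becomes $D^{\star}_{\lambda_c}I_{n-1}(\rho_{n-1}(t,\cdot))$ by the definition $D^{\star}_l\Psi=\sum_n I_n(l\hat{\otimes}g_{n-1})$. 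The final term $-\gamma\rho_n$ maps trivially to $-\gamma I_n(\rho_n(t,\cdot))$.

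The main obstacle I anticipate is making the identification of the second term fully rigorous, namely matching the symmetrized contraction appearing in the definition of $D_{\lambda_d}$ with the raw integral $\int_0^1\lambda_d(y)\rho_{n+1}(t,\cdot,y)dy$ that appears in \eqref{equation proof}. The definition of $D_l$ in Section 3 contracts $\lambda_d$ against the last variable of a symmetric kernel, but I must verify that the result is still the correct symmetric function of the remaining $n$ variables and that no symmetrization factors are lost; this is exactly where the symmetry of $\rho_{n+1}$ and the identity $h_{n+1}\hat{\otimes}g_n$ versus $h_{n+1}\otimes g_n$ (the idempotence and self-adjointness of symmetrization used in \eqref{wick}) come into play. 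A secondary technical point is the treatment of boundary terms: the passage from $\sum_i\partial^2_{x_i}$ to $-d\Gamma(\mathcal{A})$ implicitly uses the Neumann boundary conditions \eqref{initial proof} together with the boundary conditions on $\{\xi_k\}$, so I would note that the operator $\mathcal{A}$ is the self-adjoint realization with those conditions and that the $\xi_k$ diagonalize it, ensuring $d\Gamma(-\mathcal{A})$ is the correct image.

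Finally I would verify the initial conditions: applying $I_n$ to \eqref{initial proof} and using $I_1(\zeta)$ for $n=1$ and $I_n(0)=0$ otherwise gives the stated initial data directly, with the convention $I_{-1}(\cdot)\equiv 0$ handling the $n=0$ creation term. Assembling the four matched terms yields \eqref{equation 5}, completing the proof.
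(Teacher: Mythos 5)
Your proposal is correct and follows essentially the same route as the paper: the paper likewise rewrites the system in contraction/symmetrization notation (identifying $(n+1)\int_0^1\lambda_d(y)\rho_{n+1}(t,\cdot,y)\,dy$ with $(n+1)\,\lambda_d\otimes_1\rho_{n+1}(t,\cdot)$ and $\frac{1}{n}\sum_i\lambda_c(x_i)\rho_{n-1}$ with $\lambda_c\hat{\otimes}\rho_{n-1}$), uses the continuity and symmetry of $\rho_n$, $\partial_t\rho_n$ and $\sum_i\mathcal{A}_i\rho_n$ from the classical-solution assumption to place them in $L^2_s([0,1]^n)$, applies $I_n$ term by term, and reads off the operators $d\Gamma(-\mathcal{A})$, $D_{\lambda_d}$, $D^{\star}_{\lambda_c}$ exactly as in your dictionary. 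The only slip is your parenthetical invoking $N$ to tame $\mathcal{A}$ (no projection appears in this proposition); the correct justification, which you also give and which the paper uses, is that $\sum_i\mathcal{A}_i\rho_n(t,\cdot)\in L^2_s([0,1]^n)$ by the regularity of the classical solution, so $d\Gamma(-\mathcal{A})I_n(\rho_n(t,\cdot))$ is simply notation for $-I_n\bigl(\sum_i\mathcal{A}_i\rho_n(t,\cdot)\bigr)$.
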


\begin{proof}
	Using the first order contraction, see \eqref{contraction}, and symmetrized tensor product, see \eqref{symm}, we can reformulate \eqref{equation proof} as
	\begin{align}\label{equation 3}
	\begin{split}
	\partial_t\rho_0(t)=&\lambda_d\otimes_1\rho_1(t,\cdot)-\gamma\rho_0(t);\\
	\partial_t\rho_n(t,x_1,...,x_n)=&-\sum_{i=1}^n\mathcal{A}_i\rho_n(t,x_1,...,x_n)+(n+1)(\lambda_d\otimes_1\rho_{n+1}(t,\cdot))(x_1,...,x_n)\\
	&+(\lambda_c\hat{\otimes}\rho(t,\cdot))(x_1,...,x_n)-\gamma\rho_n(t,x_1,...,x_n).
	\end{split}
	\end{align}
	The continuity and symmetry of the functions $\rho_n(t,x_1,...,x_n)$, $\partial_t\rho_n(t,x_1,...,x_n)$ and $\sum_{i=1}^n\mathcal{A}_i\rho_n(t,x_1,...,x_n)$ entail their membership to $L^2_s([0,1]^n)$; this allows us to perform, for any $n\geq 1$, an $n$-th order multiple It\^o integral on both sides of the equation \eqref{equation 3} to get
	\begin{align*}
	\begin{split}
	I_n(\partial_t\rho_n(t,\cdot))=&-I_n\left(\sum_{i=1}^n\mathcal{A}_i\rho_n(t,\cdot)\right)+(n+1)I_n\left(\lambda_d\otimes_1\rho_{n+1}(t,\cdot)\right)\\
	&+I_n\left(\lambda_c\hat{\otimes}\rho_{n-1}(t,\cdot)\right)-\gamma I_n(\rho_n(t,\cdot)),
	\end{split}
	\end{align*} 
	or equivalently,
	\begin{align*}
	\begin{split}
	\partial_tI_n(\rho_n(t,\cdot))=&d\Gamma(-\mathcal{A})I_n(\rho_n(t,\cdot))+D_{\lambda_d}I_{n+1}\left(\rho_{n+1}(t,\cdot)\right)\\
	&+D^{\star}_{\lambda_c}I_{n-1}\left(\rho_{n-1}(t,\cdot)\right)-\gamma I_n(\rho_n(t,\cdot)).
	\end{split}
	\end{align*}
	The last identity holds for all $n\geq 1$, $t>0$, $\mathbb{P}$-almost surely. The initial conditions in \eqref{equation 5} are readily checked.
\end{proof}

Our next step is to construct a generalized stochastic process $\{\Phi(t)\}_{t\geq 0}$ out of the sequence $\{I_n(\rho_n(t,\cdot))\}_{n\geq 0}$ in the spirit of the Wiener-It\^o chaos expansion.

\begin{proposition}
Let $\{\rho_n\}_{n\geq 0}$ be a classical solution to \eqref{equation proof}-\eqref{initial proof}. Then, the stochastic process
\begin{align}\label{solution SDE}
	\Phi(t):=\sum_{n\geq 0}I_n(\rho_n(t,\cdot)),\quad t\geq 0,
\end{align}
belongs to $\mathtt{F}^{\star}$ and solves the differential equation
\begin{align}\label{SDE infinite}
	\begin{split}
		\partial_t\Phi(t)=&d\Gamma(-\mathcal{A})\Phi(t)+D_{\lambda_d}\Phi(t)+D^{\star}_{\lambda_c}\Phi(t)-\gamma \Phi(t), \quad t> 0,\\
		\Phi(0)=&I_1(\zeta).
	\end{split}
\end{align} 
\end{proposition}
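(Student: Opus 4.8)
The plan is to derive \eqref{SDE infinite} by summing the chaos-wise identities of Proposition \ref{proposition 1} over $n$ and recognizing each group of terms as one of the operators on the right-hand side acting on the entire series $\Phi(t)$. I would first dispose of the membership claim, then identify chaos components, and finally match left- and right-hand sides component by component.

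For membership, recall that $\{\rho_n\}_{n\geq 0}$ is a classical solution, so each $\rho_n(t,\cdot)$ is continuous and symmetric on the compact set $[0,1]^n$ and hence belongs to $L^2_s([0,1]^n)$. Since $\mathtt{F}^{\star}$ is by definition the set of formal series $\sum_{n\geq 0}I_n(h_n)$ with $h_0\in\mathbb{R}$ and $h_n\in L^2_s([0,1]^n)$ (no convergence in $\mathbb{L}^2(\Omega)$ being required), the generalized random variable $\Phi(t)=\sum_{n\geq 0}I_n(\rho_n(t,\cdot))$ defined in \eqref{solution SDE} lies in $\mathtt{F}^{\star}$ for every $t\geq 0$; this is the same observation already exploited in Proposition \ref{proposition 1}.

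The core of the argument is an identification of chaos components, reading \eqref{SDE infinite} in $\mathtt{F}^{\star}$, i.e. component by component (equivalently, after pairing with an arbitrary test variable $\varphi\in\mathtt{F}$, which carries only finitely many nonzero kernels). I would record how each operator redistributes the chaos orders of $\Phi(t)$: the differential second quantization operator $d\Gamma(-\mathcal{A})$ preserves the order, so its $n$-th component is $d\Gamma(-\mathcal{A})I_n(\rho_n(t,\cdot))$; the directional Malliavin derivative $D_{\lambda_d}$ lowers the order by one, so after reindexing $m\mapsto n=m-1$ its $n$-th component is $D_{\lambda_d}I_{n+1}(\rho_{n+1}(t,\cdot))$; the adjoint $D^{\star}_{\lambda_c}$ raises the order by one, so after reindexing $m\mapsto n=m+1$ its $n$-th component is $D^{\star}_{\lambda_c}I_{n-1}(\rho_{n-1}(t,\cdot))$; finally $-\gamma\Phi(t)$ contributes $-\gamma I_n(\rho_n(t,\cdot))$. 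Collecting these, the $n$-th chaos component of the right-hand side of \eqref{SDE infinite} is exactly the right-hand side of \eqref{equation 5} furnished by Proposition \ref{proposition 1}.

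It then remains to match the left-hand side: interpreting $\partial_t$ in $\mathtt{F}^{\star}$ through $\langle\langle\partial_t\Phi(t),\varphi\rangle\rangle=\partial_t\langle\langle\Phi(t),\varphi\rangle\rangle$ and using that the pairing with a finite-order $\varphi$ involves only finitely many kernels, the $n$-th component of $\partial_t\Phi(t)$ is $\partial_t I_n(\rho_n(t,\cdot))$, the regularity of the classical solution justifying the term-by-term differentiation of the finite sum. Proposition \ref{proposition 1} equates this with the $n$-th component of the right-hand side, so \eqref{SDE infinite} holds in $\mathtt{F}^{\star}$; the initial condition follows from \eqref{initial proof}, since $\rho_0(0)=0$, $\rho_1(0,\cdot)=\zeta$ and $\rho_n(0,\cdot)=0$ for $n>1$ give $\Phi(0)=I_1(\zeta)$. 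I expect the only delicate point to be the status of $d\Gamma(-\mathcal{A})$: the operator $\mathcal{A}=-\partial^2_x+\lambda_d$ is unbounded, whereas $d\Gamma$ was introduced for bounded $A$, which is what forces the duality reading in $\mathtt{F}^{\star}$. This is legitimate here because the specific kernels $\rho_n(t,\cdot)$ are smooth (and, under Assumption \ref{assumption 2}, $\lambda_d$ is a finite combination of the smooth eigenfunctions $\xi_k$), so $\sum_i\mathcal{A}_i\rho_n(t,\cdot)$ is again continuous and lies in $L^2_s([0,1]^n)$—precisely the regularity already used in Proposition \ref{proposition 1}.
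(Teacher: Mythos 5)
Your proposal is correct and follows essentially the same route as the paper: both arguments rest on Proposition \ref{proposition 1}, regroup the identities \eqref{equation 5} over all chaos orders (with the conventions $I_{-1}\equiv 0$ and $D_{\lambda_d}I_0=0$ handled by your reindexing), and verify the initial condition from \eqref{initial proof}. Your component-by-component reading in $\mathtt{F}^{\star}$, justified by pairing against finite-order test variables, is just a more explicit rendering of the paper's statement that the series may be interchanged with $\partial_t$, $d\Gamma(-\mathcal{A})$, $D_{\lambda_d}$ and $D^{\star}_{\lambda_c}$ when working in $\mathtt{F}^{\star}$.
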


\begin{proof}
The continuity of $\rho_n(t,\cdot)$ together with its partial derivatives up to the second order ensure that the dual pairing $\langle\langle \Phi(t),\varphi\rangle\rangle$ between $\Phi(t)$ from \eqref{solution SDE} and any element $\varphi=\sum_{n=0}^MI_n(h_n)$ of $\mathtt{F}$ is well defined thus entailing the membership of $\Phi(t)$ to $\mathtt{F}^{\star}$, for all $t\geq 0$. To prove that $\{\Phi(t)\}_{t\geq 0}$ solves \eqref{SDE infinite}, we sum equation \eqref{equation 5} over $n\geq 0$ with the convention that $I_{-1}(\cdot):=0$ (recall also that $D_{\lambda_d}I_0(\cdot)=0$): this will lead to
\begin{align*}
\partial_t\Phi(t)=&\partial_t\sum_{n\geq 0}I_n(\rho_n(t,\cdot))=\sum_{n\geq 0}\partial_tI_n(\rho_n(t,\cdot))\\
=&\sum_{n\geq 0}d\Gamma(-\mathcal{A})I_n(\rho_n(t,\cdot))+\sum_{n\geq 0}D_{\lambda_d}I_{n+1}\left(\rho_{n+1}(t,\cdot)\right)\\
&+\sum_{n\geq 0}D^{\star}_{\lambda_c}I_{n-1}\left(\rho_{n-1}(t,\cdot)\right)-\gamma \sum_{n\geq 0}I_n(\rho_n(t,\cdot))\\
=&d\Gamma(-\mathcal{A})\sum_{n\geq 0}I_n(\rho_n(t,\cdot))+D_{\lambda_d}\sum_{n\geq 0}I_{n+1}\left(\rho_{n+1}(t,\cdot)\right)\\
&+D^{\star}_{\lambda_c}\sum_{n\geq 0}I_{n-1}\left(\rho_{n-1}(t,\cdot)\right)-\gamma \sum_{n\geq 0}I_n(\rho_n(t,\cdot))\\
=&d\Gamma(-\mathcal{A})\sum_{n\geq 0}I_n(\rho_n(t,\cdot))+D_{\lambda_d}\sum_{n\geq 1}I_{n}\left(\rho_{n}(t,\cdot)\right)\\
&+D^{\star}_{\lambda_c}\sum_{n\geq 1}I_{n-1}\left(\rho_{n-1}(t,\cdot)\right)-\gamma \sum_{n\geq 0}I_n(\rho_n(t,\cdot))\\
=&d\Gamma(-\mathcal{A})\Phi(t)+D_{\lambda_d}\Phi(t)+D^{\star}_{\lambda_c}\Phi(t)-\gamma \Phi(t).
\end{align*}
Observe that the smoothness of the functions $\{\rho_n\}_{n\geq 0}$ allows for the interchange between the series and operators $\partial_t$, $d\Gamma(-\mathcal{A})$, $D_{\lambda_d}$ and $D^{\star}_{\lambda_c}$ (since we are working in the space $\mathtt{F}^{\star}$). The initial condition in \eqref{SDE infinite} is trivially verified and the proof is complete.  	
\end{proof}

\begin{remark}
The stochastic process $\{\Phi(t)\}_{t\geq 0}$ and differential equation \eqref{SDE infinite} provide a concise reformulation of the sequence $\{\rho_n\}_{n\geq 0}$ and system of partial differential equations \eqref{equation proof}-\eqref{initial proof}. In principle, one may start solving equation \eqref{SDE infinite} and then identify, via the generalized Stroock-Taylor formula \eqref{ST}, the kernels of this solution as the sequence $\{\rho_n\}_{n\geq 0}$ fulfilling \eqref{equation proof}-\eqref{initial proof}.    
\end{remark}

We now proceed with the investigation of the projected sequence $\{\Pi_N^{\otimes n}\rho_n(t,\cdot)\}_{n\geq 0}$. 

\begin{proposition}\label{proposition 2}
	Let $\{\rho_n\}_{n\geq 0}$ be a classical solution to \eqref{equation proof}-\eqref{initial proof}. Then, for any $N\geq N_0$ the sequence of random variables $\{I_n(\Pi_N^{\otimes n}\rho_n(t,\cdot))\}_{n\geq 0}$ satisfies for any $t\geq 0$ the equations
	\begin{align}\label{equation 6}
	\begin{split}
	\partial_tI_n(\Pi_N^{\otimes n}\rho_n(t,\cdot))=&d\Gamma(-\mathcal{A})I_n(\Pi_N^{\otimes n}\rho_n(t,\cdot))+D_{\lambda_d}I_{n+1}\left(\Pi_N^{\otimes (n+1)}\rho_{n+1}(t,\cdot)\right)\\
	&+D^{\star}_{\Pi_N\lambda_c}I_{n-1}\left(\Pi_N^{\otimes (n-1)}\rho_{n-1}(t,\cdot)\right)-\gamma I_n(\Pi_N^{\otimes n}\rho_n(t,\cdot));\\
	I_1(\Pi_N\rho_1(0,\cdot))=&I_1(\Pi_N\zeta);\\
	I_n(\Pi_N^{\otimes n}\rho_n(0,\cdot))=&0,\mbox{ for all $n\neq 1$},
	\end{split}
	\end{align} 
	with probability one.
\end{proposition}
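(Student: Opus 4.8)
The plan is to mirror the proof of Proposition \ref{proposition 1}, but to start from the already-established projected system \eqref{equation projected} rather than from the original equation \eqref{equation 3}. Since \eqref{equation projected} holds for a classical solution, the task reduces to applying the $n$-th order multiple It\^o integral $I_n$ to both of its sides and translating each term into operator form via the dictionary of Section 3; the stochastic equation \eqref{equation 6} is then read off directly.

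First I would record that each projected kernel $\Pi_N^{\otimes n}\rho_n(t,\cdot)$ is a finite linear combination of the tensor products $\xi_{j_1}\otimes\cdots\otimes\xi_{j_n}$ with $1\leq j_1,\ldots,j_n\leq N$, hence a smooth element of $L^2_s([0,1]^n)$. Symmetry is preserved because $\Pi_N^{\otimes n}$ commutes with permutations of the coordinates (the spanning set $\{\xi_{i_1}\otimes\cdots\otimes\xi_{i_n}\}$ is permutation invariant) and $\rho_n(t,\cdot)$ is symmetric. Thus $I_n\big(\Pi_N^{\otimes n}\rho_n(t,\cdot)\big)$ is well defined for every $n\geq 1$, and the smoothness of a classical solution permits the interchange of $I_n$ with $\partial_t$ exactly as in Proposition \ref{proposition 1}.

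Then I would rewrite the three nontrivial right-hand side terms of \eqref{equation projected} using the contraction and symmetrized tensor product of \eqref{contraction}--\eqref{symm}: the diffusion term is $-\sum_{i=1}^n\mathcal{A}_i\Pi_N^{\otimes n}\rho_n$ (using $\mathcal{A}\xi_{j_i}=\alpha_{j_i}\xi_{j_i}$), the degradation gain term equals $(n+1)\big(\Pi_N^{\otimes(n+1)}\rho_{n+1}(t,\cdot)\big)\otimes_1\lambda_d$, and the creation term equals $(\Pi_N\lambda_c)\hat{\otimes}\big(\Pi_N^{\otimes(n-1)}\rho_{n-1}(t,\cdot)\big)$. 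Applying $I_n$ and invoking the definitions of $d\Gamma(-\mathcal{A})$, of the directional derivative $D_{\lambda_d}$, and of the adjoint $D^{\star}_{\Pi_N\lambda_c}$ converts these into the right-hand side of \eqref{equation 6}, while $-\gamma\Pi_N^{\otimes n}\rho_n$ passes through unchanged. The case $n=0$ is handled by the same conventions as before, namely $I_{-1}\equiv 0$ and $d\Gamma(-\mathcal{A})I_0(\cdot)=0$, together with $D_{\lambda_d}I_1(\Pi_N\rho_1)=\int_0^1\lambda_d(y)\Pi_N\rho_1(t,y)\,dy$; the initial conditions follow at once from \eqref{initial projected}.

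The only genuine subtlety is the asymmetric appearance of $\lambda_d$ versus $\Pi_N\lambda_c$ in \eqref{equation 6}. The propensity $\lambda_d$ enters undressed because it is integrated against the extra variable of a fully projected kernel and, by Assumption \ref{assumption 2}, $\Pi_N\lambda_d=\lambda_d$, so the contraction with $\lambda_d$ loses no information; by contrast $\lambda_c$ multiplies the kernel pointwise and must therefore be replaced by its projection $\Pi_N\lambda_c$, which is precisely the form in which it already sits inside \eqref{equation projected}. Keeping this bookkeeping straight is the main thing to watch; the remainder is a verbatim repetition of the passage from \eqref{equation 3} to \eqref{equation 5}.
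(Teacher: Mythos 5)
Your proof is correct, but it travels around the commuting square in the opposite direction from the paper. You go ``project first, It\^o-integrate second'': you invoke the projected deterministic system \eqref{equation projected} (established as a standalone proposition in Section \ref{verification}) and then apply $I_n$ to both sides, translating each term with the same dictionary the paper uses in Proposition \ref{proposition 1} (contraction against $\lambda_d$ $\leftrightarrow$ $D_{\lambda_d}$, symmetrized tensor product with $\Pi_N\lambda_c$ $\leftrightarrow$ $D^{\star}_{\Pi_N\lambda_c}$, eigenfunction expansion of $\sum_i\mathcal{A}_i$ $\leftrightarrow$ $d\Gamma(-\mathcal{A})$). The paper instead goes ``It\^o-integrate first, project second'': it starts from \eqref{equation 5}, writes $I_n(\Pi_N^{\otimes n}\rho_n(t,\cdot))=\Gamma(\Pi_N)I_n(\rho_n(t,\cdot))$, and then proves three commutation relations --- $\Gamma(\Pi_N)$ with $d\Gamma(-\mathcal{A})$, with $D_{\lambda_d}$ (where Assumption \ref{assumption 2} enters), and with $D^{\star}_{\lambda_c}$, the last one picking up the projection as $\Gamma(\Pi_N)D^{\star}_{\lambda_c}=D^{\star}_{\Pi_N\lambda_c}\Gamma(\Pi_N)$ --- by testing against stochastic exponentials. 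Your route is shorter and more elementary, since all of the commutation work is inherited from the deterministic projection step already done in Section \ref{verification} rather than redone at the stochastic level; your closing remark on why $\lambda_d$ stays undressed while $\lambda_c$ gets projected is precisely the content of the paper's relations \eqref{a2} and \eqref{a3}, absorbed into \eqref{equation projected}. What the paper's route buys is that Section \ref{proof main result} remains self-contained (its chain of propositions never leans on the verification section, which is presented as illustrative), and that the commutation identities between $\Gamma(\Pi_N)$ and the Malliavin-calculus operators are isolated as reusable facts in their own right rather than being implicit in a projected PDE.
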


\begin{proof}
We know from Proposition \ref{proposition 1} that the sequence of random variables $\{I_n(\rho_n(t,\cdot))\}_{n\geq 0}$ satisfies equation \eqref{equation 5}; moreover, according to the definition of second quantization operator we can write
\begin{align*}
I_n(\Pi_N^{\otimes n}\rho_n(t,\cdot))=\Gamma(\Pi_N)I_n(\rho_n(t,\cdot)).
\end{align*} 
Therefore,
\begin{align}\label{commutation}
\begin{split}
\partial_tI_n(\Pi_N^{\otimes n}\rho_n(t,\cdot))=&\partial_t\Gamma(\Pi_N)I_n(\rho_n(t,\cdot))\\
=&\Gamma(\Pi_N)\partial_tI_n(\rho_n(t,\cdot))\\
=&\Gamma(\Pi_N)\left[d\Gamma(-\mathcal{A})I_n(\rho_n(t,\cdot))+D_{\lambda_d}I_{n+1}\left(\rho_{n+1}(t,\cdot)\right)\right]\\
&+\Gamma(\Pi_N)\left[D^{\star}_{\lambda_c}I_{n-1}\left(\rho_{n-1}(t,\cdot)\right)-\gamma I_n(\rho_n(t,\cdot))\right]\\
=&\Gamma(\Pi_N)d\Gamma(-\mathcal{A})I_n(\rho_n(t,\cdot))+\Gamma(\Pi_N)D_{\lambda_d}I_{n+1}\left(\rho_{n+1}(t,\cdot)\right)\\
&+\Gamma(\Pi_N)D^{\star}_{\lambda_c}I_{n-1}\left(\rho_{n-1}(t,\cdot)\right)-\gamma\Gamma(\Pi_N) I_n(\rho_n(t,\cdot)).
\end{split}
\end{align}
We now have to investigate the commutation relations of $\Gamma(\Pi_N)$ with $d\Gamma(-\mathcal{A})$, $D_{\lambda_d}$ and $D^{\star}_{\lambda_c}$. Let $h\in C^2([0,1])$; then,
\begin{align}\label{1}
\begin{split}
\mathbb{E}[\Gamma(\Pi_N)d\Gamma(-\mathcal{A})I_n(\rho_n(t,\cdot))\cdot\mathcal{E}(h)]&=\mathbb{E}[d\Gamma(-\mathcal{A})I_n(\rho_n(t,\cdot))\cdot\mathcal{E}(\Pi_Nh)]\\
&=\mathbb{E}[I_n(\rho_n(t,\cdot))\cdot d\Gamma(-\mathcal{A})\mathcal{E}(\Pi_Nh)]\\
&=\mathbb{E}[I_n(\rho_n(t,\cdot))\cdot D^{\star}_{-\mathcal{A}\Pi_Nh}\mathcal{E}(\Pi_Nh)]\\
&=\mathbb{E}[I_n(\rho_n(t,\cdot))\cdot D^{\star}_{-\Pi_N\mathcal{A}h}\mathcal{E}(\Pi_Nh)].
\end{split}
\end{align}
On the other hand,
\begin{align}\label{2}
\begin{split}
\mathbb{E}[d\Gamma(-\mathcal{A})\Gamma(\Pi_N)I_n(\rho_n(t,\cdot))\cdot\mathcal{E}(h)]&=\mathbb{E}[\Gamma(\Pi_N)I_n(\rho_n(t,\cdot))\cdot d\Gamma(-\mathcal{A})\mathcal{E}(h)]\\
&=\mathbb{E}[\Gamma(\Pi_N)I_n(\rho_n(t,\cdot))\cdot D^{\star}_{-\mathcal{A}h}\mathcal{E}(h)]\\
&=\mathbb{E}[I_n(\rho_n(t,\cdot))\cdot \Gamma(\Pi_N)D^{\star}_{-\mathcal{A}h}\mathcal{E}(h)]\\
&=\mathbb{E}[I_n(\rho_n(t,\cdot))\cdot D^{\star}_{-\Pi_N\mathcal{A}h}\mathcal{E}(\Pi_Nh)].
\end{split}
\end{align}
Comparing the first and last members of \eqref{1} and \eqref{2} we deduce that
\begin{align*}
\mathbb{E}[\Gamma(\Pi_N)d\Gamma(-\mathcal{A})I_n(\rho_n(t,\cdot))\cdot\mathcal{E}(h)]=\mathbb{E}[d\Gamma(-\mathcal{A})\Gamma(\Pi_N)I_n(\rho_n(t,\cdot))\cdot\mathcal{E}(h)],
\end{align*}
for all $h\in C^2([0,1])$ and hence that
\begin{align}\label{a1}
\Gamma(\Pi_N)d\Gamma(-\mathcal{A})I_n(\rho_n(t,\cdot))=d\Gamma(-\mathcal{A})\Gamma(\Pi_N)I_n(\rho_n(t,\cdot)),\quad \mathbb{P}-\mbox{a.s}.
\end{align}
We now consider the commutation between $\Gamma(\Pi_N)$ and $D_{\lambda_d}$:
\begin{align*}
\mathbb{E}[\Gamma(\Pi_N)D_{\lambda_d}I_n(\rho_n(t,\cdot))\cdot\mathcal{E}(h)]&=\mathbb{E}[D_{\lambda_d}I_n(\rho_n(t,\cdot))\cdot\mathcal{E}(\Pi_Nh)]\\
&=\mathbb{E}[nI_{n-1}(\lambda_d\otimes_1\rho_n(t,\cdot))\cdot\mathcal{E}(\Pi_Nh)]\\
&=n\langle \lambda_d\otimes_1\rho_n(t,\cdot),(\Pi_Nh)^{\otimes {n-1}}\rangle_{L^2([0,1]^{n-1})}\\
&=n\langle \lambda_d\otimes_1\Pi_N^{\otimes (n-1)}\rho_n(t,\cdot),h^{\otimes {n-1}}\rangle_{L^2([0,1]^{n-1})},
\end{align*}
while
\begin{align*}
\mathbb{E}[D_{\lambda_d}\Gamma(\Pi_N)I_n(\rho_n(t,\cdot))\cdot\mathcal{E}(h)]&=\mathbb{E}[D_{\lambda_d}I_n(\Pi_N^{\otimes n}\rho_n(t,\cdot))\cdot\mathcal{E}(h)]\\
&=\mathbb{E}[nI_{n-1}(\lambda_d\otimes_1\Pi_N^{\otimes n}\rho_n(t,\cdot))\cdot\mathcal{E}(h)]\\
&=\mathbb{E}[nI_{n-1}(\Pi_N\lambda_d\otimes_1\Pi_N^{\otimes (n-1)}\rho_n(t,\cdot))\cdot\mathcal{E}(h)]\\
&=n\langle \Pi_N\lambda_d\otimes_1\Pi_N^{\otimes (n-1)}\rho_n(t,\cdot),h^{\otimes {n-1}}\rangle_{L^2([0,1]^{n-1})}.
\end{align*}
Therefore,
\begin{align}\label{a2}
\Gamma(\Pi_N)D_{\lambda_d}I_n(\rho_n(t,\cdot))=D_{\lambda_d}\Gamma(\Pi_N)I_n(\rho_n(t,\cdot)),
\end{align}
if $\lambda_d=\Pi_N\lambda_d$; but this is the case since we are assuming $N\geq N_0$ (recall Assumption \ref{assumption 2}). Lastly, we investigate the commutation between $\Gamma(\Pi_N)$ and $D^{\star}_{\lambda_c}$:
\begin{align*}
\mathbb{E}[\Gamma(\Pi_N)D^{\star}_{\lambda_c}I_{n-1}\left(\rho_{n-1}(t,\cdot)\right)\cdot\mathcal{E}(h)]=&\mathbb{E}[D^{\star}_{\lambda_c}I_{n-1}\left(\rho_{n-1}(t,\cdot)\right)\cdot\mathcal{E}(\Pi_N h)]\\
=&\mathbb{E}[I_{n-1}\left(\rho_{n-1}(t,\cdot)\right)\cdot D_{\lambda_c}\mathcal{E}(\Pi_N h)]\\
=&\mathbb{E}[I_{n-1}\left(\rho_{n-1}(t,\cdot)\right)\cdot \langle\lambda_c,\Pi_Nh\rangle\mathcal{E}(\Pi_N h)]\\
=&\mathbb{E}[I_{n-1}\left(\rho_{n-1}(t,\cdot)\right)\cdot \langle\Pi_N\lambda_c,h\rangle\mathcal{E}(\Pi_N h)]\\
=&\mathbb{E}[\Gamma(\Pi_N)I_{n-1}\left(\rho_{n-1}(t,\cdot)\right)\cdot \langle\Pi_N\lambda_c,h\rangle\mathcal{E}(h)]\\
=&\mathbb{E}[D^{\star}_{\Pi_N\lambda_c}\Gamma(\Pi_N)I_{n-1}\left(\rho_{n-1}(t,\cdot)\right)\cdot\mathcal{E}(h)];
\end{align*}
comparing the first and last members we can conclude that
\begin{align}\label{a3}
\Gamma(\Pi_N)D^{\star}_{\lambda_c}I_{n-1}\left(\rho_{n-1}(t,\cdot)\right)=D^{\star}_{\Pi_N\lambda_c}\Gamma(\Pi_N)I_{n-1}\left(\rho_{n-1}(t,\cdot)\right),
\end{align}
almost surely. Now, using \eqref{a1}-\eqref{a2}-\eqref{a3} in \eqref{commutation} we obtain
\begin{align*}
\partial_tI_n(\Pi_N^{\otimes n}\rho_n(t,\cdot))=&\Gamma(\Pi_N)d\Gamma(-\mathcal{A})I_n(\rho_n(t,\cdot))+\Gamma(\Pi_N)D_{\lambda_d}I_{n+1}\left(\rho_{n+1}(t,\cdot)\right)\\
&+\Gamma(\Pi_N)D^{\star}_{\lambda_c}I_{n-1}\left(\rho_{n-1}(t,\cdot)\right)-\gamma\Gamma(\Pi_N) I_n(\rho_n(t,\cdot))\\
=&d\Gamma(-\mathcal{A})I_n(\Pi_N^{\otimes n}\rho_n(t,\cdot))+D_{\lambda_d}I_{n+1}\left(\Pi_N^{\otimes (n+1)}\rho_{n+1}(t,\cdot)\right)\\
&+D^{\star}_{\Pi_N\lambda_c}I_{n-1}\left(\Pi_N^{\otimes (n-1)}\rho_{n-1}(t,\cdot)\right)-\gamma I_n(\Pi_N^{\otimes n}\rho_n(t,\cdot)),
\end{align*}
which corresponds to \eqref{equation 6}.
\end{proof}

The last step of our construction consists in rewriting equations \eqref{equation 6} from a standard partial differential equation's perspective. 

\begin{proposition}
	 For any $n\geq 1$, 
	\begin{align}\label{representation}
		I_n(\Pi_N^{\otimes n}\rho_n(t,\cdot))=\varphi_{n,N}\left(t,I_1(\xi_1),...,I_1(\xi_N)\right),
	\end{align}
	where $\varphi_{n,N}:[0,+\infty[\times\mathbb{R}^N\to\mathbb{R}$ is a polynomial of degree $n$ in the variables $I_1(\xi_1)$,..., $I_1(\xi_N)$. With this representation, equations \eqref{equation 6} read
\begin{align}\label{PDE for representation}
	\begin{split}
	(\partial_t\varphi_{n,N})\left(t,z\right)=&\sum_{k=1}^N\alpha_k\partial^2_{z_k}\varphi_{n,N}\left(t,z\right)-\sum_{k=1}^N\alpha_kz_k\partial_{z_k}\varphi_{n,N}\left(t,z\right)+\sum_{k=1}^Nd_k\partial_{z_k}\varphi_{n+1,N}\left(t,z\right)\\
	&+\varphi_{n-1,N}\left(t,z\right)\cdot \left(\sum_{k=1}^Nc_kz_k\right)-\sum_{k=1}^Nc_k\partial_{z_k}\varphi_{n-1,N}\left(t,z\right)-\gamma\varphi_{n,N}(t,z),
	\end{split}	
\end{align}
for $n\geq 1$, $t\geq 0$ and $z=(z_1,...,z_N)\in\mathbb{R}^{N}$. Here we set $\varphi_{-1,N}\equiv 0$.
\end{proposition}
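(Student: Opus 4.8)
The plan is to prove two things at once: that $I_n(\Pi_N^{\otimes n}\rho_n(t,\cdot))$ is genuinely a polynomial in the independent Gaussians $I_1(\xi_1),\dots,I_1(\xi_N)$, and that under this identification each of the four operators on the right-hand side of \eqref{equation 6} translates into the corresponding differential operator in \eqref{PDE for representation}. The whole argument rests on an \emph{operator dictionary} between Fock-space operators restricted to the range of $\Gamma(\Pi_N)$ and differential operators in the Gaussian coordinates $z_k=I_1(\xi_k)$.

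First I would establish the representation. Expanding in the orthonormal tensor basis gives $\Pi_N^{\otimes n}\rho_n(t,\cdot)=\sum_{i_1,\dots,i_n=1}^N a_{i_1\cdots i_n}(t)\,\xi_{i_1}\otimes\cdots\otimes\xi_{i_n}$ with $a_{i_1\cdots i_n}(t)=\langle\rho_n(t,\cdot),\xi_{i_1}\otimes\cdots\otimes\xi_{i_n}\rangle_{L^2([0,1]^n)}$, so by linearity of $I_n$ it suffices to treat each $I_n(\xi_{i_1}\otimes\cdots\otimes\xi_{i_n})$. The building block is the classical identity expressing a multiple It\^o integral of a symmetrized tensor product of distinct orthonormal functions as a product of (probabilist's) Hermite polynomials of the first-chaos variables, namely $I_n(\xi_{k_1}^{\hat{\otimes} m_1}\hat{\otimes}\cdots\hat{\otimes}\xi_{k_p}^{\hat{\otimes} m_p})=\prod_{j=1}^p H_{m_j}(I_1(\xi_{k_j}))$ with $\sum_j m_j=n$. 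Since $\{I_1(\xi_k)\}_{k=1}^N$ are i.i.d. standard normal by orthonormality of the $\xi_k$, this shows $I_n(\Pi_N^{\otimes n}\rho_n(t,\cdot))$ is a polynomial of degree $n$ in $I_1(\xi_1),\dots,I_1(\xi_N)$, and I define $\varphi_{n,N}(t,\cdot)$ to be precisely that polynomial. Uniqueness of $\varphi_{n,N}$, and the crucial ability to pass from an identity between random variables to an identity between polynomials in $z\in\mathbb{R}^N$, both follow from the linear independence in $\mathbb{L}^2(\Omega)$ of the monomials $\prod_k I_1(\xi_k)^{\alpha_k}$.

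Next I would assemble the dictionary. Since the Malliavin derivative is a derivation on $\mathtt{F}$ and $D_{\xi_k}I_1(\xi_j)=\delta_{kj}$, the operator $D_{\xi_k}$ acts on any polynomial of the $I_1(\xi_j)$ exactly as $\partial_{z_k}$; using $\lambda_d=\Pi_N\lambda_d=\sum_{k=1}^N d_k\xi_k$ (Assumption \ref{assumption 2}) this gives $D_{\lambda_d}\leftrightarrow\sum_k d_k\partial_{z_k}$, and likewise $D_{\Pi_N\lambda_c}\leftrightarrow\sum_k c_k\partial_{z_k}$. For the adjoint I would invoke the commutation relation \eqref{gaussian commutation}, $D^{\star}_l\Psi=\Psi\cdot I_1(l)-D_l\Psi$, with $I_1(\Pi_N\lambda_c)=\sum_k c_k I_1(\xi_k)$, to obtain $D^{\star}_{\Pi_N\lambda_c}\leftrightarrow(\sum_k c_k z_k)-\sum_k c_k\partial_{z_k}$. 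For the differential second quantization term I would use linearity of $d\Gamma$ and $\mathcal{A}\xi_k=\alpha_k\xi_k$ to write $d\Gamma(-\mathcal{A})=-\sum_k\alpha_k\mathcal{N}_k$ on the range of $\Gamma(\Pi_N)$, where $\mathcal{N}_k$ is the number operator of the $k$-th mode, counting the Hermite degree in $z_k$; combined with the Hermite eigenrelation $(\partial_z^2-z\partial_z)H_m=-mH_m$, which identifies $\mathcal{N}_k\leftrightarrow z_k\partial_{z_k}-\partial_{z_k}^2$, this yields $d\Gamma(-\mathcal{A})\leftrightarrow\sum_k\alpha_k(\partial_{z_k}^2-z_k\partial_{z_k})$.

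Finally I would substitute the representation \eqref{representation} into \eqref{equation 6}, apply the dictionary term by term (noting that $\partial_t$ commutes with the substitution and simply acts on the time-dependent coefficients of $\varphi_{n,N}$), and read off an identity between polynomials evaluated at the random point $z=(I_1(\xi_1),\dots,I_1(\xi_N))$; the linear-independence remark above then upgrades it to the pointwise identity \eqref{PDE for representation}, valid for every $z\in\mathbb{R}^N$. I expect the differential second quantization term to be the main obstacle: correctly identifying $d\Gamma(-\mathcal{A})$ with the finite-dimensional Ornstein--Uhlenbeck generator $\sum_k\alpha_k(\partial_{z_k}^2-z_k\partial_{z_k})$ is the conceptual heart of the statement, since it is exactly this step that converts the abstract Fock-space dynamics into the concrete parabolic operator of \eqref{PDE}. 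Once the polynomial identification, its uniqueness, and the $d\Gamma$--to--Ornstein--Uhlenbeck correspondence are in place, the remaining three terms are routine derivation-rule bookkeeping.
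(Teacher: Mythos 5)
Your proposal is correct and arrives at exactly the dictionary the paper uses, but through genuinely different arguments at the two non-routine points. For the representation \eqref{representation}, the paper identifies $I_n(\Pi_N^{\otimes n}\rho_n(t,\cdot))=\Gamma(\Pi_N)I_n(\rho_n(t,\cdot))$ with the conditional expectation $\mathbb{E}[I_n(\rho_n(t,\cdot))\,|\,\sigma(I_1(\xi_1),\dots,I_1(\xi_N))]$ and then appeals, without writing it out, to the ``well known'' polynomial structure of multiple It\^o integrals; you instead expand $\Pi_N^{\otimes n}\rho_n(t,\cdot)$ in the tensor basis and invoke It\^o's theorem expressing $I_n$ of symmetrized tensor powers of orthonormal functions as products of Hermite polynomials. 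Your route is more explicit, and it also supplies the linear-independence argument that upgrades the almost-sure identity at $z=(I_1(\xi_1),\dots,I_1(\xi_N))$ to the pointwise identity \eqref{PDE for representation} --- a step the paper dispatches with the phrase ``upon replacing $I_1(\xi_k)$ with $z_k$.'' For the term $d\Gamma(-\mathcal{A})$, the paper computes operator-algebraically: it writes $d\Gamma(-\mathcal{A})\Phi=-\delta(\mathcal{A}D_{\cdot}\Phi)$ via \eqref{dsq}, applies the chain rule $D_x\varphi_{n,N}=\sum_{k}(\partial_{z_k}\varphi_{n,N})\,\xi_k(x)$ together with $\mathcal{A}\xi_k=\alpha_k\xi_k$, and then uses the identity $\delta(F\xi_k)=F\,I_1(\xi_k)-D_{\xi_k}F$ to produce $\sum_k\alpha_k\bigl(\partial^2_{z_k}-z_k\partial_{z_k}\bigr)$ directly, with $\delta(\xi_k)=I_1(\xi_k)$ becoming $z_k$; you instead diagonalize, writing $d\Gamma(-\mathcal{A})=-\sum_k\alpha_k\mathcal{N}_k$ on the range of $\Gamma(\Pi_N)$ and citing the Hermite eigenrelation $(\partial^2_z-z\partial_z)H_m=-mH_m$. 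The two computations are equivalent --- the $\delta$-identity is precisely the creation/annihilation form of the Ornstein--Uhlenbeck generator whose eigenfunctions are the $H_m$ --- but yours leans on the Hermite basis you set up in the first step, while the paper's never names an eigenbasis of the chaos decomposition and would generalize more gracefully to a non-diagonal $\mathcal{A}$ (it would simply yield an OU operator with coefficient matrix $\langle\mathcal{A}\xi_j,\xi_k\rangle_{L^2([0,1])}$, whereas your spectral shortcut is tied to Assumption \ref{assumption}). The remaining terms are handled identically in both proofs: $D_{\lambda_d}$ via the derivation/chain rule property with $\lambda_d=\Pi_N\lambda_d$, and $D^{\star}_{\Pi_N\lambda_c}$ via the commutation identity \eqref{gaussian commutation}.
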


\begin{proof}
We start observing (see for instance \cite{Janson}) that
\begin{align*}
	I_n(\Pi_N^{\otimes n}\rho_n(t,\cdot))=\Gamma(\Pi_N)	I_n(\rho_n(t,\cdot))=\mathbb{E}[I_n(\rho_n(t,\cdot))|\sigma(I_1(\xi_1),...,I_1(\xi_N))],
\end{align*}
where the right hand side above stands for the conditional expectation of the random variable $I_n(\rho_n(t,\cdot))$ with respect to the sigma-algebra generated by the random variables $\{I_1(\xi_1),...,I_1(\xi_N)\}$. It is also well known from the theory of multiple It\^o integrals that $I_n(\rho_n(t,\cdot))$ can be written as an infinite linear combination of polynomials of degree $n$ in the variables $I_1(\xi_1), I_1(\xi_2),...$; the action of the conditional expectation above reduce that linear combination to a finite number of terms in the variables $\{I_1(\xi_1),...,I_1(\xi_N)\}$. This proves identity \eqref{representation}. Let us now derive equation \eqref{PDE for representation}; according to formula \eqref{dsq} we have
\begin{align*}
d\Gamma(-\mathcal{A})I_n(\Pi_N^{\otimes n}\rho_n(t,\cdot))=&d\Gamma(-\mathcal{A})\varphi_{n,N}\left(t,I_1(\xi_1),...,I_1(\xi_N)\right)\\
=&-\delta\left(\mathcal{A}D_{\cdot}\varphi_{n,N}\left(t,I_1(\xi_1),...,I_1(\xi_N)\right)\right)\\
=&-\delta\left(\sum_{k=1}^N(\partial_{z_k}\varphi_{n,N})\left(t,I_1(\xi_1),...,I_1(\xi_N)\right)\mathcal{A}\xi_k(\cdot)\right)\\
=&-\sum_{k=1}^N\alpha_k\delta\left((\partial_{z_k}\varphi_{n,N})\left(t,I_1(\xi_1),...,I_1(\xi_N)\right)\xi_k(\cdot)\right)\\
=&-\sum_{k=1}^N\alpha_k\delta(\xi_k)\cdot(\partial_{z_k}\varphi_{n,N})\left(t,I_1(\xi_1),...,I_1(\xi_N)\right)\\
&+\sum_{k=1}^N\alpha_k\partial^2_{z_k}\varphi_{n,N}\left(t,I_1(\xi_1),...,I_1(\xi_N)\right);
\end{align*}  
moreover, using the chain rule for Malliavin derivatives we get
\begin{align*}
D_{\lambda_d}I_{n+1}\left(\Pi_N^{\otimes (n+1)}\rho_{n+1}(t,\cdot)\right)=&D_{\lambda_d}\varphi_{n+1,N}\left(t,I_1(\xi_1),...,I_1(\xi_N)\right)\\
=&\sum_{k=1}^N(\partial_{z_k}\varphi_{n+1,N})\left(t,I_1(\xi_1),...,I_1(\xi_N)\right)\langle\lambda_d,\xi_k\rangle_{L^2([0,1])}\\
=&\sum_{k=1}^Nd_k(\partial_{z_k}\varphi_{n+1,N})\left(t,I_1(\xi_1),...,I_1(\xi_N)\right),
\end{align*}
and
\begin{align*}
D^{\star}_{\Pi_N\lambda_c}I_{n-1}\left(\Pi_N^{\otimes (n-1)}\rho_{n-1}(t,\cdot)\right)=&D^{\star}_{\Pi_N\lambda_c}\varphi_{n-1,N}\left(t,I_1(\xi_1),...,I_1(\xi_N)\right)\\
=&\varphi_{n-1,N}\left(t,I_1(\xi_1),...,I_1(\xi_N)\right)\cdot I_1(\Pi_N\lambda_c)\\
&-D_{\Pi_N\lambda_c}\varphi_{n-1,N}\left(t,I_1(\xi_1),...,I_1(\xi_N)\right)\\
=&\varphi_{n-1,N}\left(t,I_1(\xi_1),...,I_1(\xi_N)\right)\cdot \left(\sum_{k=1}^Nc_kI_1(\xi_k)\right)\\
&-\sum_{k=1}^N(\partial_{z_k}\varphi_{n-1,N})\left(t,I_1(\xi_1),...,I_1(\xi_N)\right)c_k.
\end{align*}
Note that in the second equality above we made use of identity \eqref{gaussian commutation}. Combining all the previous identities we can rewrite equations \eqref{equation 6} as
\begin{align*}
(\partial_t\varphi_{n,N})\left(t,I_1(\xi_1),...,I_1(\xi_N)\right)=&\sum_{k=1}^N\alpha_k\partial^2_{z_k}\varphi_{n,N}\left(t,I_1(\xi_1),...,I_1(\xi_N)\right)\\
&-\sum_{k=1}^N\alpha_k\delta(\xi_k)\cdot(\partial_{z_k}\varphi_{n,N})\left(t,I_1(\xi_1),...,I_1(\xi_N)\right)\\
&+\sum_{k=1}^Nd_k(\partial_{z_k}\varphi_{n+1,N})\left(t,I_1(\xi_1),...,I_1(\xi_N)\right)\\
&+\varphi_{n-1,N}\left(t,I_1(\xi_1),...,I_1(\xi_N)\right)\cdot \left(\sum_{k=1}^Nc_kI_1(\xi_k)\right)\\
&-\sum_{k=1}^N(\partial_{z_k}\varphi_{n-1,N})\left(t,I_1(\xi_1),...,I_1(\xi_N)\right)c_k\\
&-\gamma\varphi_{n,N}\left(t,I_1(\xi_1),...,I_1(\xi_N)\right).
\end{align*}
This corresponds to \eqref{PDE for representation} upon replacing $I_1(\xi_k)$ with $z_k$, for $k=1,...,N$.
\end{proof}

We are now ready to prove Theorem \ref{main theorem}. Let 
\begin{align}\label{final}
u(t,I_1(\xi_1),...,I_1(\xi_N)):=\sum_{n\geq 0}\varphi_{n,N}\left(t,I_1(\xi_1),...,I_1(\xi_N)\right).
\end{align}
According to the last proposition, summing over $n\geq 0$ equations \eqref{PDE for representation} we see that the function $u$ in \eqref{final} solves
\begin{align*}
\begin{split}
\partial_tu(t,z)&=\sum_{k=1}^N\alpha_k\partial^2_{z_k}u(t,z)+\sum_{k=1}^N\left(d_k-c_k-\alpha_k z_k\right)\partial_{z_k}u(t,z)+\left(\sum_{k=1}^Nc_kz_k-\gamma\right)u(t,z)\\
u(0,z)&=\sum_{k=1}^N\zeta_kz_k.
\end{split}
\end{align*}
On the other hand, by construction 
\begin{align*}
\sum_{n\geq 0}\varphi_{n,N}\left(t,I_1(\xi_1),...,I_1(\xi_N)\right)=\sum_{n\geq 0}I_n\left(\Pi_N^{\otimes n}\rho_n(t,\cdot)\right);
\end{align*}
this gives
\begin{align*}
u(t,I_1(\xi_1),...,I_1(\xi_N))=\sum_{n\geq 0}I_n\left(\Pi_N^{\otimes n}\rho_n(t,\cdot)\right).
\end{align*}
Thus, the kernels in the Wiener-It\^o chaos expansion of $u(t,I_1(\xi_1),...,I_1(\xi_N))$, which in general can be represented via the Stroock-Taylor formula \eqref{ST}, coincide with the sequence $\{\Pi_N^{\otimes n}\rho_n(t,\cdot)\}_{n\geq 0}$. Therefore,
\begin{align*}
\Pi_N^{\otimes n}\rho_n(t,x_1,...,x_n)=&\frac{1}{n!}\mathbb{E}\left[D_{x_1,...,x_n}u(t,I_1(\xi_1),...,I_1(\xi_N))\right]\\
=&\frac{1}{n!}\sum_{j_1,...j_n=1}^N\mathbb{E}\left[\left(\partial_{z_{j_1}}\cdot\cdot\cdot \partial_{z_{j_n}}u\right)(t,Z)\right]\xi_{j_1}(x_1)\cdot\cdot\cdot\xi_{j_n}(x_n),
\end{align*}
where 
\begin{align*}
\mathbb{E}\left[\left(\partial_{z_{j_1}}\cdot\cdot\cdot \partial_{z_{j_n}}u\right)(t,Z)\right]=\int_{\mathbb{R}^N}\left(\partial_{z_{j_1}}\cdot\cdot\cdot \partial_{z_{j_n}}u\right)(t,z)(2\pi)^{-N/2}e^{-\frac{|z|^2}{2}}dz.
\end{align*}
This completes the proof of our main result (recall that the orthonormality of the functions $\xi_1,...,\xi_N$ implies that $I_1(\xi_1),...,I_1(\xi_N)$ are independent standard Gaussian random variables).

\begin{remark}
Following the previous construction, we could formally associate equation \eqref{SDE infinite} with
\begin{align}\label{last}
	\begin{split}
		\partial_tu(t,z)&=\sum_{k\geq 1}\alpha_k\partial^2_{z_k}u(t,z)+\sum_{k\geq 1}\left(d_k-c_k-\alpha_k z_k\right)\partial_{z_k}u(t,z)+\left(\sum_{k\geq 1}c_kz_k-\gamma\right)u(t,z)\\
		u(0,z)&=\zeta(z),
	\end{split}
\end{align}
where now $u(t,z)=u(t,z_1,z_2,....)$ is a function of infinitely many variables, and obtain the identity
\begin{align*}
\Phi(t)=u\left(t,I_1(\xi_1), I_1(\xi_2),...\right).
\end{align*} 
From this point of view, the use of the projection operator $\Pi_N$ is needed for reducing \eqref{last} to a standard partial differential equation with a finite number of spatial variables.  
\end{remark}

\end{document}